\theoremstyle{plain}
\newtheorem{thm}{Theorem}[section]
\newtheorem{thmI}{Theorem}
\newtheorem{lem}[thm]{Lemma}
\newtheorem{prp}[thm]{Proposition}
\theoremstyle{definition}
\newtheorem{dfn}[thm]{Definition}
\newtheorem{ques}[thm]{Question}
\theoremstyle{remark}
\newtheorem{rmk}[thm]{Remark}
\newtheorem{eg}[thm]{Example}
\numberwithin{equation}{section}
\crefname{thm}{theorem}{theorems}
\crefname{cor}{corollary}{corollaries}
\crefname{lem}{lemma}{lemmata}
\crefname{prp}{proposition}{propositions}
\newcommand{\C}{\mathbb{C}}
\newcommand{\cc}{\mathbb{C}}
\newcommand{\Q}{\mathbb{Q}}
\newcommand{\qq}{\mathbb{Q}}
\newcommand{\R}{\mathbb{R}}
\newcommand{\rr}{\mathbb{R}}
\newcommand{\Z}{\mathbb{Z}}
\newcommand{\zz}{\mathbb{Z}}
\newcommand{\co}{\mathcal O}
\newcommand{\mc}{\mathcal}
\newcommand{\mf}{\mathfrak}
\newcommand{\cbm}[2]{\left\{{#1}\; \middle|\; {#2}\right\}}
\newcommand{\rb}[1]{\left({#1}\right)}
\newcommand{\tr}{\operatorname{tr}}
\def\house#1{{%
    \setbox0=\hbox{$#1$}
    \vrule height \dimexpr\ht0+1.4pt width .4pt depth \dp0\relax
    \vrule height \dimexpr\ht0+1.4pt width \dimexpr\wd0+2pt depth \dimexpr-\ht0-1pt\relax
    \llap{$#1$\kern1pt}
    \vrule height \dimexpr\ht0+1.4pt width .4pt depth \dp0\relax
}}
\newcommand{\ba}{\[\begin{aligned}} 
\newcommand{\ea}{\end{aligned}\]} 
\newcommand{\bd}{\begin{tikzcd}} 
\newcommand{\ed}{\end{tikzcd}} 
\newcommand{\bdd}{\begin{center}\begin{tikzcd}} 
\newcommand{\edd}{\end{tikzcd}\end{center}} 
\newcommand{\bdp}{\begin{center}\begin{tikzpicture}} 
\newcommand{\edp}{\end{tikzpicture}\end{center}} 
\newcommand{\bi}{\begin{itemize}} 
\newcommand{\ei}{\end{itemize}} 
\newcommand{\bea}{\begin{enumerate}[label=(\alph*)]} 
\newcommand{\ben}{\begin{enumerate}[label=(\arabic*)]} 
\newcommand{\ber}{\begin{enumerate}[label=(\roman*)]} 
\newcommand{\beani}{\begin{enumerate}[label=(\alph*), wide, labelwidth=!, labelindent=0pt]} 
\newcommand{\benni}{\begin{enumerate}[label=(\arabic*), wide, labelwidth=!, labelindent=0pt]} 
\newcommand{\berni}{\begin{enumerate}[label=(\roman*), wide, labelwidth=!, labelindent=0pt]} 
\newcommand{\ee}{\end{enumerate}} 
\title[Most totally real fields do not have  universal forms or Northcott property]{Most totally real fields do not have \\ universal forms or Northcott property}
\author[N.~Daans]{Nicolas Daans}
\author[V.~Kala]{Vítězslav Kala}
\author[S.~H.~Man]{Siu Hang Man}
\author[M.~Widmer]{Martin Widmer}
\author[P.~Yatsyna]{Pavlo Yatsyna}
\address[N.D., V.K., S.H.M., P.Y.]{Charles University, Faculty of Mathematics and Physics, Department of Algebra, Sokolov\-ská 83, 186~75 Praha~8, Czech Republic}
\address[N.D.]{KU Leuven, Faculty of Science, Department of Mathematics, Celestijnenlaan 200B, 3001 Heverlee, Belgium}
\address[M.W.]{Graz University of Technology, Institute of Analysis and Number Theory, Steyrergasse 30/II, 8010 Graz, Austria}
\email[N.~Daans]{nicolas.daans@kuleuven.be}
\email[V.~Kala]{vitezslav.kala@matfyz.cuni.cz}
\email[S.~H.~Man]{shman@karlin.mff.cuni.cz}
\email[M.~Widmer]{martin.widmer@tugraz.at}
\email[P.~Yatsyna]{p.yatsyna@matfyz.cuni.cz}
\thanks{The authors were supported by Czech Science Foundation GAČR, grant 21-00420M (N.D., V.K., S.H.M.), Charles University programmes PRIMUS/24/SCI/010 (N.D., S.H.M, P.Y.), PRIMUS/25/SCI/008 (S.H.M.), and UNCE/24/SCI/022 (P.Y.). \newline
The authors declare no competing interest.
}
\begin{document}
\begin{abstract}
We show that, in the space of all totally real fields equipped with the constructible topology, the set of fields that admit a universal quadratic form, or have the Northcott property, is meager.
The main tool is a new theorem on the number of square classes of totally positive units represented by a quadratic lattice of a given rank. \vspace{\baselineskip}\newline
\noindent\textsc{Significance statement.}
The classical fact that every positive integer is a sum of four squares led to the much more general study of universal quadratic forms. Their behavior over extensions of the integers by elements such as the square root of $2$ crucially depends on the existence of small elements in the extension. This is captured well by the Northcott property, which has been influential since the 1960s due to its connection to decidability of solving diophantine equations. In this paper we consider extensions of infinite degree, and establish that fields which admit a universal form or have the Northcott property are very sparse among them. Along the way, we obtain a new criterion connecting units in the extension with universal forms.
\end{abstract}

\makeatletter
\@namedef{subjclassname@2020}{%
  \textup{2020} Mathematics Subject Classification}
\makeatother
\subjclass[2020]{11E12, 11E20, 11G50, 11H55, 11R04, 11R20, 11R80}
\keywords{universal quadratic form, quadratic lattice, totally real field, infinite extension, Northcott property, totally positive unit}

\maketitle

\section{Introduction}

Since the 18th century, Lagrange's four square theorem on the universality of $X^2 + Y^2 + Z^2+ W^2$ led mathematicians to wonder what other quadratic forms represent all positive integers, i.e. are \textit{universal}. After the classification works by Ramanujan \cite{Ramanujan}, Dickson \cite{Di1}, Willerding \cite{Wi}, Conway--Schneeberger \cite{Bhargava}, among many others, Bhargava--Hanke \cite{Bhargava-Hanke} wrapped up the story over $\Z$ by proving the 290--Theorem, stating that a positive definite quadratic form is universal if it represents $1,2,3,\dots,290$.

The analogue of Lagrange's four square theorem does not hold when we replace the rational integers $\Z$ by the algebraic integers $\co_K$ of a totally real number field $K$. To be more precise, let $\co_K^+$ be the set of totally positive algebraic integers in $K$ (i.e.~those that are mapped to a positive element by all the embeddings $K\hookrightarrow \R$). 
Given a \textit{quadratic form $Q\in\co_K[X_1,\dots,X_n]$ over $K$} and $\alpha \in \co_K$,
we say that $Q$ \textit{represents $\alpha$} if $Q(x) = \alpha$ for some $x \in \co_K^n$. Further, $Q$ is \textit{positive definite} if $Q(x)$ is totally positive for every $0\neq x\in\co_K^n$, and \textit{universal (over $K$)} if, moreover, it represents each $\alpha\in\co_K^+$.

In the 1940s, Siegel \cite{Siegel} proved that in every totally real number field $K$ different from $\Q$ and $\Q(\sqrt 5)$, there exists a totally positive algebraic integer which is not the sum of any number of squares of algebraic integers in $K$, and Maa{\ss} \cite{Maass} showed that in $K = \Q(\sqrt 5)$, the sum of three squares (i.e.~the quadratic form $X^2 + Y^2 + Z^2$) is universal.
Nevertheless, over every totally real number field, there exists \textit{some} universal quadratic form thanks to the asymptotic local-global principle \cite{HKK} (see, e.g.~\cite[Section 5]{KalaSurvey} for a proof sketch), but these universal forms often require many variables, as shown by numerous recent works, e.g.~\cite{BK,KT,Yat19}. In particular, for every $n$, the set of squarefree integers $D>1$ such that the
real quadratic field $\Q(\sqrt D)$ admits an $n$-ary universal form has natural density 0 \cite{Kala-Yatsyna-Zmija}; 
a similar result also holds for multiquadratic fields \cite{Man}. 
Despite this progress, Kitaoka's influential conjecture that there are only finitely many totally real number fields with a ternary universal quadratic form remains open (see, for example, \cite{KalaYatsyna2}).
For contrast, let us note that the situation is very different over number fields which are not totally real; see \Cref{sect:non-tot-real} for details.

In fact, the above definition of a universal quadratic form over $K$ works perfectly well for any field $K$ of totally real algebraic numbers, not necessarily of finite degree over $\Q$. However, 
the existence of universal quadratic forms becomes more precarious when one considers rings of integers in totally real extensions $K/\Q$ of \textit{infinite} degree.
Recently, the first three authors \cite{DKM-Northcott} showed that, depending on $K$, universal quadratic forms over $K$ may or may not exist. For example, the unary form $X^2$ is universal over the field of all totally real algebraic numbers $\Q^{\tr}$, whereas there is no universal form over $\Q^{\tr,2}=\Q(\sqrt n\mid n\in\Z_{>0})$.
However, the results of \cite{DKM-Northcott} left completely open the question of arithmetic statistics, i.e. \textit{how often do infinite extensions admit a universal form?}
We now resolve it by showing that the answer is \textit{rarely!} 
As there are uncountably many totally real infinite extensions, we cannot just order them by an invariant such as the discriminant, but we need to proceed topologically.

To make this precise, consider the set $\mc X$ of all subfields of $\Q^{\tr}$.
Since every totally real field can be embedded into $\Q^{\tr}$, we may think of $\mc X$ as the set of all totally real fields.
We endow $\mc X$ with the \emph{constructible topology}, i.e.~the coarsest topology for which the sets
$ \lbrace K \in \mc X \mid a \in K \rbrace $
are both open and closed, for all $a \in \Q^{\tr}$.
With respect to this topology, we obtain the following result.

\begin{thmI}[see \Cref{prp:uqf-meager}]\label{TI:meager}
The set 
of totally real fields which admit a universal quadratic form 
is a meager subset of $\mc X$.
\end{thmI}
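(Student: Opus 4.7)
The plan is to express the set of $K \in \mc X$ admitting a universal form as a countable union of nowhere dense subsets. For each $n \geq 1$, let
\[
\mc U_n \;=\; \{K \in \mc X : K\text{ admits a universal quadratic form in at most }n\text{ variables}\}.
\]
Since every universal form has some finite number of variables, the set of fields admitting a universal form equals $\bigcup_{n\geq 1}\mc U_n$. It therefore suffices to prove that each $\mc U_n$ is nowhere dense in $\mc X$, i.e.\ that for every nonempty basic open $V \subseteq \mc X$ there is a nonempty basic open $V' \subseteq V$ with $V' \cap \mc U_n = \emptyset$.

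\textbf{Key input: many totally positive unit square classes rule out a rank-$n$ universal form.} The new theorem on square classes advertised in the abstract will furnish a function $\Phi \colon \Z_{>0} \to \Z_{>0}$ such that any quadratic $\co_K$-lattice of rank at most $n$ represents at most $\Phi(n)$ square classes of totally positive units of $\co_K$. Since a universal form must represent every totally positive unit, each $K \in \mc U_n$ satisfies $|\co_K^{*+}/(\co_K^{*+})^2| \leq \Phi(n)$.

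\textbf{Construction of $V'$.} Let $V = \{K : a_1,\ldots,a_r \in K,\ b_1,\ldots,b_s \notin K\}$ be a nonempty basic open set, and fix $N$ with $2^N > \Phi(n)$. I would produce totally positive algebraic units $\alpha_1,\ldots,\alpha_N \in \Q^{\tr}$ and a field $L \in \mc X$ such that, writing $\beta_I = \prod_{i \in I} \alpha_i$ and $\sqrt{\beta_I}$ for the totally positive square root in $\Q^{\tr}$, one has $a_1,\ldots,a_r,\alpha_1,\ldots,\alpha_N \in L$, each $b_j \notin L$, and $\sqrt{\beta_I} \notin L$ for every nonempty $I \subseteq \{1,\ldots,N\}$. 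Then
\[
V' \;=\; V \;\cap\; \bigcap_{i=1}^N \{K : \alpha_i \in K\} \;\cap\; \bigcap_{\emptyset \neq I \subseteq \{1,\ldots,N\}} \{K : \sqrt{\beta_I} \notin K\}
\]
is a nonempty basic open subset of $V$ (witnessed by $L$). For any $K \in V'$, the totally positive units $\alpha_1,\ldots,\alpha_N$ have $\mathbb{F}_2$-linearly independent classes in $\co_K^{*+}/(\co_K^{*+})^2$: any nontrivial relation would make some $\beta_I$ a square in $K$, forcing $\sqrt{\beta_I} \in K$, which the definition of $V'$ forbids. Consequently $|\co_K^{*+}/(\co_K^{*+})^2| \geq 2^N > \Phi(n)$ and $K \notin \mc U_n$, as required.

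\textbf{Main obstacle.} The technical heart is the simultaneous construction of $L$ and the $\alpha_i$'s. The natural strategy is to take $L$ to be the compositum of $\Q(a_1,\ldots,a_r)$ with a totally real multiquadratic extension $\Q(\sqrt{p_1},\ldots,\sqrt{p_k})$ for carefully chosen rational primes $p_j$, and then to extract the $\alpha_i$ from the rank-$(2^k-1)$ totally positive unit group of this multiquadratic field via Dirichlet's unit theorem. The primes must be chosen generically enough that $L$ avoids the finitely many $b_j$, and the $\alpha_i$ must be selected so that their Kummer classes remain $\mathbb{F}_2$-independent in $L^*/(L^*)^2$ (equivalently, no $\sqrt{\beta_I}$ lies in $L$). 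Verifying that such choices are possible---which amounts to a Kummer-theoretic analysis of the totally positive unit group of multiquadratic fields together with the fact that only finitely many exclusions need to be satisfied---is the main technical point of the argument.
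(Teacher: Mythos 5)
Your topological skeleton is sound and close in spirit to the paper's: both arguments reduce meagerness to exhibiting, inside every nonempty basic open subset of $\mc X$, a smaller basic open set all of whose members have too many totally positive unit square classes to carry a low-rank universal form, the counting input being \Cref{thm:rscl}. Your characterization of nowhere density via shrinking basic opens is also fine. The genuine gap is that the step you label ``main obstacle'' is not an implementation detail but the entire arithmetic content of the theorem, and the route you sketch for it does not work as stated. Dirichlet's unit theorem controls the rank of $\mc{O}_M^\times$ but says nothing about the group $\mc{O}_M^{\times,+}/\mc{O}_M^{\times 2}$: this group can be trivial (in $\Q(\sqrt{p})$ with $p\equiv 1 \bmod 4$ the fundamental unit has norm $-1$, so every totally positive unit is a square), and for a general multiquadratic field its size is a delicate question about unit signatures, so ``generically chosen'' primes $p_1,\dots,p_k$ give no guarantee of even one non-square totally positive unit, let alone $N$ independent ones. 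Worse, you need the independence of the classes of $\alpha_1,\dots,\alpha_N$ to persist in the compositum $L$ with $\Q(a_1,\dots,a_r)$ (and in every $K\in V'$), i.e.\ that no $\beta_I$ becomes a square after a base change you do not control; this persistence is itself a nontrivial Kummer-theoretic statement. The paper has to import exactly this: it uses units $\mu\in\mc{O}_M^{\times,+}$ with $\mu\notin M^{\times 2}\Q^\times$ (a strictly stronger condition than being a non-square, chosen precisely so that non-squareness survives composita), supplied by \cite[Propositions 6.6 and 7.1]{DKM-Northcott} for the special biquadratic fields $M=\Q(\sqrt{n(n+1)},\sqrt{3n(3n+4)})$ whose defining integers are made squarefree via Erd\H{o}s's theorem.

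There is also a structural point worth noting: your decomposition by the number of variables forces you to produce, in every basic open set, about $\log_2(2n-2)$ simultaneously independent totally positive unit classes, whereas the paper decomposes the exceptional set as $\bigcup_S U_S$ over finite sets $S$ of totally positive units of $\mc{O}^{\tr}$ (candidate square-class representatives); each $U_S$ is closed, and to show it has empty interior via \Cref{lem:dense} one needs only a \emph{single} unit $\mu$ with $\mu s$ a non-square in the compositum for all $s\in S$, which is arranged by enlarging $L_0$ so that all elements of $S$ become squares there. So to complete your argument you must either prove the existence of arbitrarily many totally positive unit classes that remain independent under the relevant composita (not known to follow from your multiquadratic/Dirichlet sketch), or reorganize the countable decomposition as the paper does so that one unit per instance suffices.
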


Recall that a subset of a topological space is called \emph{meager} if it is a countable union of nowhere dense subsets (see \Cref{sect:topological})
and \emph{comeager} if its complement is meager.
In a Baire space non-empty open sets are non-meager and comeager sets are dense. 
Therefore, meagerness is a natural notion of ``smallness'' in a Baire space, and
it turns out that $\mc X$ is a Baire space. 
Our approach is inspired by \cite{DF21} and \cite{EMSW20}, who considered meagerness in a similar topological space to study non-definability of rings of integers in infinite extensions of~$\qq$.

In order to prove \Cref{TI:meager}, we first establish a new necessary criterion for the existence of universal quadratic forms in a given number of variables, which is of independent interest, in particular also for the study of universal quadratic forms over number fields.

For a commutative ring $R$, denote by $R^\times$ the set of units in $R$, and by $R^{\times 2}$ the set of squares in $R^\times$.
For an algebraic extension $K/\qq$, denote by $\mc{O}_K$ the ring of algebraic integers in $K$, and for $K \in \mc X$, 
denote by $\co_K^+$ the set of totally
positive elements in $\co_K$, and let $\co_K^{\times,+}= \co_K^\times\cap \co_K^+$ 
(recall that $\mc X$ denotes the set of all subfields of $\Q^{\tr}$, and so each $K\in\mc X$ is totally real).
We have $\mc{O}_K^{\times 2} \subset \mc{O}_K^{\times, +} \subset \mc{O}_K^\times \subset K^\times$, and all of them are groups with respect to multiplication.
We will say that a quadratic form over $K$ \emph{represents} a class of $\mc{O}_K^{\times, +}/\mc{O}_K^{\times 2}$ if it represents at least one element of $\mc{O}_K^{\times, +}$ in the given equivalence class. 
We then establish the following.

\begin{thmI}[see \Cref{thm:rscl}]\label{TI:units-universal}
Let $K \in \mc X$ and $n \geq 2$.
A positive definite $n$-ary quadratic form over $K$ represents at most $2n - 2$ classes of $\mc{O}_K^{\times, +}/\mc{O}_K^{\times 2}$.
\end{thmI}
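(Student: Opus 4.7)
I would proceed by induction on $n$, aiming for the inductive step $f(n) \leq f(n-1) + 2$, where $f(m)$ denotes the maximum number of unit square classes in $\mc{O}_K^{\times, +} / \mc{O}_K^{\times 2}$ represented by a positive definite $m$-ary form over $\mc{O}_K$. Combined with a base case $f(2) \leq 2$ proved directly, this chains to $f(n) \leq 2n - 2$.

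The main construction is a \emph{rescaled orthogonal projection}. Let $L$ be the underlying quadratic lattice and write $B(x, y) := Q(x + y) - Q(x) - Q(y)$ for the associated bilinear form. Given $v \in L$ with $Q(v) = \varepsilon \in \mc{O}_K^{\times, +}$, for any $w \in L$ set
\[
w^{*} \;:=\; 2\varepsilon\, w \;-\; B(v, w)\, v \;\in\; L.
\]
Then $B(v, w^{*}) = 0$, so $w^{*}$ lies in $L_{1} := \{ x \in L : B(v, x) = 0 \}$, an $\mc{O}_K$-submodule of $L$ of rank $n - 1$. A direct computation yields
\[
Q(w^{*}) \;=\; \varepsilon \bigl( 4\varepsilon\, Q(w) - B(v, w)^{2} \bigr) \;=:\; \varepsilon D_{w},
\]
where $D_{w}$ is (up to a constant factor) the discriminant of the binary sublattice $\mc{O}_K v + \mc{O}_K w$ and is totally positive by positive definiteness of $Q$.

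For the inductive step, start with representing vectors $v_{1}, \ldots, v_{k}$ for $k$ distinct unit classes, fix $v := v_{1}$, and consider the $k - 1$ projections $v_{i}^{*} \in L_{1}$ for $i \geq 2$. Whenever $D_{i} := D_{v_{i}}$ is a unit, $v_{i}^{*}$ represents the class $[\varepsilon_{1} D_{i}] \in \mc{O}_K^{\times, +}/\mc{O}_K^{\times 2}$ in $L_{1}$. The aim is to show that $L_{1}$ represents at least $k - 2$ distinct such classes; then the inductive hypothesis gives $k - 2 \leq 2(n - 1) - 2$, hence $k \leq 2n - 2$. Note that the target bound is tight exactly when precisely one of the projections fails to contribute. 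For the base case $n = 2$, three representing vectors $v_{1}, v_{2}, v_{3} \in \mc{O}_K^{2}$ are linearly dependent over $K$, so $v_{3} = c_{1} v_{1} + c_{2} v_{2}$ with $c_{1}, c_{2} \in K$; the resulting identity $\varepsilon_{3} = c_{1}^{2} \varepsilon_{1} + c_{1} c_{2} B(v_{1}, v_{2}) + c_{2}^{2} \varepsilon_{2}$, together with a careful analysis of the $\mc{O}_K$-integrality of $c_{1}, c_{2}$ (after choosing $v_{1}, v_{2}$ suitably as a basis of the relevant binary sublattice), should force $[\varepsilon_{3}] \in \{ [\varepsilon_{1}], [\varepsilon_{2}] \}$.

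The main obstacle is controlling the loss in the inductive step: the element $D_{i}$ may fail to be a unit (so $v_{i}^{*}$ contributes no unit class to $L_{1}$), and distinct indices $i, j$ may yield coinciding classes $[\varepsilon_{1} D_{i}] = [\varepsilon_{1} D_{j}]$. The combined count of such ``bad'' indices has to be shown to be at most one. The factor $2\varepsilon$ appearing in the projection formula flags the primes of $\mc{O}_K$ above $2$ as the delicate ones: away from such primes the orthogonal splitting $L = \mc{O}_K v \oplus L_{1}$ is clean and $D_{i}$ being a unit is essentially automatic, whereas above $2$ one loses a ``parity bit''. I anticipate the complete proof either to choose $v = v_{1}$ cleverly so that all $D_{i}$ are simultaneously units with at most one collision in the map $i \mapsto [\varepsilon_{1} D_{i}]$, or to perform a prime-by-prime local analysis absorbing the deficit into the additive constant $+2$.
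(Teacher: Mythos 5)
Your skeleton --- induction on $n$, passing to the orthogonal complement $\mc L' = \{w \in \mc L : B(v,w)=0\}$ of a vector $v$ representing a unit $\varepsilon_1$, and the doubling trick $2\mc L \subset \mc{O}_K v + \mc L'$ leading to an integral identity of the shape $4\varepsilon = a^2 + Q(w)$ with $a \in \mc{O}_K$, $w \in \mc L'$ --- is exactly the paper's setup (the paper first rescales so that $Q(v)=1$). But the step you explicitly leave open is precisely where the paper's new idea lives, and neither of your anticipated remedies supplies it. The paper does not try to show that the projections of $k$ representing vectors give at least $k-2$ distinct classes in $\mc L'$; indeed your map $i \mapsto [\varepsilon_1 D_i]$ has no injectivity guarantee and $D_i$ need not be a unit, so this accounting is the wrong one. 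Instead, the paper bounds by $2$ the number of square classes of totally positive units represented by $(\mc L,Q)$ but \emph{not} by $(\mc L', Q|_{\mc L'})$, and the mechanism is global and cyclotomic rather than a dyadic local analysis or a clever choice of $v$: if $\varepsilon$ is such a unit and not a square, the identity forces $0 \prec \varepsilon^{-1}a^2 \prec 4$, and by Kronecker's theorem every totally real algebraic integer strictly between $0$ and $4$ equals $(\zeta+\zeta^{-1})^2$ for a root of unity $\zeta$. A careful analysis of the values $g(a)=2\cos(2\pi a)$ (which of them are units, and which of their square classes can lie in $K$) yields a dichotomy: either $\varepsilon^{-1}Q(w)$ is the square of a unit --- impossible, since then $\mc L'$ would represent $\varepsilon$ --- or $\varepsilon$ lies in the single square class $\gamma_K K^{\times 2}$, where $\gamma_K = g(1/2^m)^2$ depends only on $K$. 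Hence the only classes represented by $\mc L$ but not by $\mc L'$ are the trivial one and $\gamma_K K^{\times 2}$, giving the recursion $f(n) \le f(n-1)+2$. Nothing in your write-up replaces this number-theoretic input, so the inductive step is a genuine gap.

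The base case is likewise not established: the paper proves the bound of $2$ for $n=2$ with the same machinery (the rank-one complement represents units in at most one square class, and in the problematic case the dichotomy above forces $Q(w) \in K^{\times 2}$, so that class is the trivial one, already counted), not by your route via linear dependence of three representing vectors. That route is shaky: vectors representing units need not extend to a basis, so the coefficients $c_1, c_2$ need not be integral and controlling their denominators is exactly the hard part, with no finiteness of $\mc{O}_K$ available since $K$ may have infinite degree over $\qq$. Finally, note a structural point the paper makes explicit: the induction must be run for quadratic \emph{lattices}, not forms, because $\mc L'$ need not be a free $\mc{O}_K$-module; your inductive hypothesis, stated for $m$-ary forms, does not directly apply to $L_1$.
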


In particular, if $\mc{O}_K^{\times, +}/\mc{O}_K^{\times 2}$ is infinite, then no universal quadratic form over $K$ can exist.

\Cref{TI:units-universal} is folklore for \textit{classical} positive definite quadratic forms (i.e. whose cross-terms are all divisible by 2) over a totally real number field $K$; 
recently, the first three authors \cite[Proposition 4.2]{DKM-Northcott} 
wrote down a proof that also explicitly covers infinite extensions. 

The significant novelty of our method is thus that it also covers arbitrary positive definite forms.
In particular, it implies a new result towards Kitaoka's conjecture: \textit{If a totally real number field has $|\mc{O}_K^{\times, +}/\mc{O}_K^{\times 2}|>4$ (i.e. it has a system of fundamental units, among which at least $3$ are totally positive), then it admits no ternary universal quadratic form.}
We do not know whether the bound $2n - 2$ in \Cref{TI:units-universal} is optimal for $n \geq 3$, see \Cref{ques:2n-2} below.

We will compare the property that a totally real field $K$ admits a universal quadratic form, to a property regarding the distribution of algebraic integers in $K$.
We consider the \textit{house} of an algebraic integer $\alpha$, defined as 
$\house{\alpha}= \max_i(|\alpha_i|)$, where 
 $\alpha_1,\dots,\alpha_n$ are all the conjugates of  $\alpha$. We then say that $\mc{O}_K$ has the \emph{Northcott property} (with respect to the house) if, for every $r \in \rr$, there exist only finitely many $\alpha \in \mc{O}_K$ with $\house{\alpha} < r$ (see Section \ref{sect:examples} for more details). 

Bounding the house  and the degree of an algebraic integer implies a bound on the modulus of the coefficients of its monic minimal polynomial in $\Z[x]$. Therefore, $\mc{O}_K$ has the Northcott property for any number field $K$. An example for a field of infinite degree was
given already in 1962 by Robinson \cite{Rob62} when she showed that the ring of integers of $\Q^{\tr,2}$
has the Northcott property, and used it to
conclude that the first order theory of this ring is undecidable.
The term {Northcott property} was coined in 2001 by Bombieri--Zannier \cite{Bombieri-Zannier} and since then, its study and applications have become a highly active research area, see e.g. \cite{Checcoli-Fehm,GR,U2,PTW,Spr,Vidaux-Videla,Widmer11,Widmer}. 

Northcott property and universal forms are not only significant from the viewpoint of arithmetic statistics or Diophantine geometry, but recent results also connected them
by showing 
that a totally real field $K$ of infinite degree over $\qq$ has no universal quadratic form  \cite{DKM-Northcott} (or a form of higher degree \cite{Prakash}) 
whenever $\mc{O}_K$ has the Northcott property with respect to the house.
We answer in the negative the open question  whether the converse holds.

\begin{thmI}[see \Cref{prp:Northcott-meager}]\label{TI:northcott}
    The set of totally real fields for which the ring of integers has the Northcott property with respect to the house is a meager subset of $\mc{X}$.
\end{thmI}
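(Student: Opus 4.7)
The plan is to exhibit the set of fields with the Northcott property as a subset of a countable union of closed nowhere dense subsets of $\mc{X}$. For $r, N \in \Z_{>0}$, let
\[ A_{r,N} := \{ K \in \mc{X} : \#\{\alpha \in \mc{O}_K : \house{\alpha} < r\} \leq N \}. \]
If $\mc{O}_K$ has the Northcott property, then in particular $K \in \bigcup_{N} A_{3,N}$, so it suffices to show each $A_{3,N}$ is nowhere dense. The complement $A_{3,N}^{c}$ is open, being a union of the clopen sets $\{K \in \mc X : \alpha_1, \dots, \alpha_{N+1} \in K\}$ indexed by $(N+1)$-tuples of distinct totally real algebraic integers of house $<3$; so $A_{3,N}$ is closed, and it remains to show that $A_{3,N}^{c}$ is dense in $\mc X$.

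Given a non-empty basic open set $U_{S,T} := \{K : S \subseteq K,\, K \cap T = \emptyset\}$ with $S, T \subseteq \Q^{\tr}$ finite, let $L := \Q(S)$, a totally real number field satisfying $L \cap T = \emptyset$. Consider the totally real cyclotomic integers $\alpha_p := 2\cos(2\pi/p) = \zeta_p + \zeta_p^{-1}$ for primes $p \geq 5$; these are pairwise distinct with $\house{\alpha_p} \leq 2 < 3$, and each generates the maximal totally real subfield of $\Q(\zeta_p)$. I aim to produce distinct primes $p_1, \dots, p_{N+1}$ such that $K := L(\alpha_{p_1}, \dots, \alpha_{p_{N+1}})$ avoids $T$; this $K$ will then lie in $U_{S,T} \cap A_{3, N}^{c}$, establishing density.

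The primes will be selected inductively. Suppose $p_1, \dots, p_i$ have been chosen so that $L_i := L(\alpha_{p_1}, \dots, \alpha_{p_i})$ is a number field with $L_i \cap T = \emptyset$. The key claim to be proven is that \emph{for every algebraic $t \notin L_i$, the set $\{p \text{ prime} : t \in L_i(\alpha_p)\}$ is finite}. Granting this, the union of these finite sets over $t \in T$ is finite, so one may pick $p_{i+1}$ avoiding it and distinct from $p_1, \dots, p_i$; then $L_{i+1} := L_i(\alpha_{p_{i+1}})$ still avoids $T$, and the induction proceeds. The claim will follow from a sub-claim: for all but finitely many pairs of distinct primes $p \neq q$, one has $L_i(\alpha_p) \cap L_i(\alpha_q) = L_i$. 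Indeed, if $t$ lay in $L_i(\alpha_p)$ for two such primes, then $t \in L_i$, contradicting the hypothesis.

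The main work is to verify the Galois-theoretic sub-claim. Since $L_i$ is a number field, $L_i \cap \Q^{\mathrm{ab}}$ is a finite abelian extension of $\Q$, hence (by Kronecker--Weber) contained in some $\Q(\zeta_m)$. For primes $p, q$ coprime to $m$, one computes $L_i \cap \Q(\zeta_{pq}) \subseteq \Q(\zeta_m) \cap \Q(\zeta_{pq}) = \Q$, whence $\mathrm{Gal}(L_i(\zeta_{pq})/L_i) \cong (\Z/p\Z)^{\times} \times (\Z/q\Z)^{\times}$. Under this identification, $L_i(\alpha_p)$ and $L_i(\alpha_q)$ correspond to the fixed fields of $\{\pm 1\} \times (\Z/q\Z)^{\times}$ and $(\Z/p\Z)^{\times} \times \{\pm 1\}$, which jointly generate the whole Galois group. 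The Galois correspondence then yields $L_i(\alpha_p) \cap L_i(\alpha_q) = L_i$. Once this is in hand, all remaining steps are routine topological book-keeping.
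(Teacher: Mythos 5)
Your argument is correct, but it takes a somewhat different route from the paper's, and the difference is mainly in the arithmetic input. The paper covers the Northcott set by the closed sets $N_S=\bigcap_{s\in S}\lbrace K : s\notin K\rbrace$ indexed by cofinite subsets $S$ of $\lbrace x\in\mc{O}^{\tr} : \house{x}<2\rbrace$, so for density of the complement it suffices to place a \emph{single} small element $g(\frac{1}{q})=\zeta_q+\zeta_q^{-1}$ into a field with prescribed intersection $K\cap L_0=K_0$ (via \Cref{lem:dense}); the coprimality of $[\Q(g(\frac{1}{q})):\Q]=\frac{q-1}{2}$ with $[L_0:\Q]$ is secured by the special prime sequence of \Cref{lem-Dirichlet}, built from CRT and Dirichlet's theorem on primes in arithmetic progressions. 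You instead use the sets $A_{3,N}$ (an equivalent countable decomposition), which forces you to adjoin $N+1$ elements $\alpha_{p_j}$ inductively, but you only need the weaker condition $K\cap T=\emptyset$ for the finite exclusion set of a basic open $U_{S,T}$, and you replace the special prime sequence by a soft Galois-theoretic fact: since $L_i\cap\Q^{\mathrm{ab}}$ lies in some $\Q(\zeta_m)$ by Kronecker--Weber, for distinct primes $p,q\geq 5$ coprime to $m$ one has $L_i\cap\Q(\zeta_{pq})=\Q$, the two subgroups $\lbrace\pm1\rbrace\times(\Z/q\Z)^{\times}$ and $(\Z/p\Z)^{\times}\times\lbrace\pm1\rbrace$ generate the whole group, and hence $L_i(\alpha_p)\cap L_i(\alpha_q)=L_i$. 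This computation is correct and buys you independence from Dirichlet's theorem and from the paper's \Cref{lem-Dirichlet} (which the paper, however, reuses for \Cref{prp-noNP-noUQF}); the paper's version is leaner per step, needing one adjunction rather than an induction. One small wording point: state your sub-claim as ``for all pairs of distinct primes coprime to $m$'' (which is what you actually prove) rather than ``for all but finitely many pairs'': the literal latter statement would not by itself bound the set of primes $p$ with $t\in L_i(\alpha_p)$, whereas the former shows at most one prime coprime to $m$ can capture a given $t\notin L_i$, which is exactly the finiteness your induction needs.
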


As the union of two meager sets is again meager, it follows that for most  $K \in \mc X$, there is no universal form and $\mc{O}_K$ does not have the Northcott property.
We shall construct a concrete example of such a field in \Cref{prp-noNP-noUQF}.

This paper is structured as follows.
In \Cref{sect:tot-real-0-4} we isolate some computations on small totally positive integers in totally real fields.
In \Cref{sect:universal-qf-units} we give a proof of a slightly more general version of \Cref{TI:units-universal}: we will consider positive definite \textit{quadratic lattices}, which generalize quadratic forms.
Even when one cares only about quadratic forms, it will be crucial for the induction step in the proof of \Cref{TI:units-universal} that we allow the more general quadratic lattices.
In \Cref{sect:examples} we apply this result to obtain some new examples of totally real fields without a universal quadratic form (or lattice), and also collect further information on the Northcott property. 
In \Cref{sect:topological} we discuss the topology on $\mc X$ in more detail before proceeding to prove Theorems \ref{TI:meager} and \ref{TI:northcott}.
We conclude by briefly addressing in \Cref{sect:non-tot-real} the existence of universal forms over non-totally real algebraic extensions of $\qq$.

\section*{Acknowledgments}

We thank Martin Čech, Stevan Gajović, Jakub Krásenský, and Dayoon Park for interesting and helpful discussions about the work,
and the anonymous referee for several very helpful comments and suggestions.

\section{Totally real algebraic integers between 0 and 4}\label{sect:tot-real-0-4}
A complex number $\alpha \in \cc$ which is algebraic over $\qq$ will be called an \textit{algebraic number}, and if it is integral over $\zz$ it will be called an \textit{algebraic integer}.
We will study algebraic field extensions $K/\qq$, which we consider as subfields of $\cc$, and will denote by $\mc{O}_K$ the ring of integers of $K$, i.e.~the set of algebraic integers in $K$.

We call an algebraic number $\alpha$ \textit{totally real} if all of its conjugates over $\qq$ are real.
We call an algebraic field extension $K/\qq$ \textit{totally real} if all of its elements are totally real.
We denote the field of all totally real numbers by $\qq^{\tr}$, and we write $\mc{O}^{\tr}$ for the ring of integers of $\qq^{\tr}$.

Given a totally real field $K$ and $\alpha, \beta \in K$, we write $\alpha \succ \beta$ (or $\beta \prec \alpha$) to say that $\sigma(\alpha) > \sigma(\beta)$ for every embedding $\sigma : K \hookrightarrow \rr$.
This does not depend on the choice of the field $K$, as every embedding $K \hookrightarrow \rr$ extends to an embedding $\qq^{\tr} \hookrightarrow \rr$.
We call a totally real algebraic number $\alpha$ \textit{totally positive} if $\alpha\succ 0$.
As in the Introduction, we denote by $\mc{O}_K^\times$, $\mc{O}_K^{\times 2}$ and $\mc{O}_K^{\times, +}$ the sets of \textit{units}, \textit{squares of units}, and \textit{totally positive units} in $\mc{O}_K$, respectively.

$\mc{X}$ denotes the set of all totally real fields, viewed as subfields of $\qq^{\tr}$. (In \Cref{sect:topological}, we will define a topology on $\mc X$, but we do not need it until then.)

We will use the fact that totally real algebraic integers $\alpha$ with $0 \prec \alpha \prec 4$ can be nicely characterized.

\begin{lem}\label{cor:smtpe} 
If $\alpha$ is a totally real algebraic integer with $0 \prec \alpha \prec 4$, then there exists a root of unity $\zeta$ such that $\alpha = (\zeta + \zeta^{-1})^{2} = 2 + \zeta^2 + \zeta^{-2}$.
\end{lem}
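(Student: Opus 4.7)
The plan is to reduce the statement to Kronecker's theorem characterising algebraic integers all of whose conjugates lie on the unit circle as roots of unity. The right element $\zeta$ is built by first extracting a square root of $\alpha$ and then solving a quadratic.

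First I would pick any $\beta \in \bar{\qq}$ with $\beta^{2} = \alpha$. For every embedding $\sigma : \qq(\beta) \hookrightarrow \cc$, one has $\sigma(\beta)^{2} = \sigma(\alpha) \in (0,4)$, so $\sigma(\beta)$ is automatically real with $|\sigma(\beta)| < 2$. In particular $\beta$ is a totally real algebraic integer and $-2 \prec \beta \prec 2$ (after possibly replacing $\beta$ by $-\beta$, we do not actually need any sign choice here).

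Next, let $\zeta$ be a root of the monic polynomial $X^{2} - \beta X + 1 \in \mc{O}^{\tr}[X]$, so $\zeta$ is an algebraic integer satisfying $\zeta + \zeta^{-1} = \beta$ and $(\zeta + \zeta^{-1})^{2} = \alpha$. For any embedding $\tau : \qq(\zeta) \hookrightarrow \cc$, the image $\tau(\zeta)$ is a root of $X^{2} - \tau(\beta) X + 1$, a quadratic with real coefficients and discriminant $\tau(\beta)^{2} - 4 < 0$. Its two roots are therefore complex conjugates with product $1$, giving $|\tau(\zeta)|^{2} = \tau(\zeta)\,\overline{\tau(\zeta)} = 1$. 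Hence every conjugate of $\zeta$ has absolute value $1$.

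Now I would invoke Kronecker's theorem: an algebraic integer all of whose conjugates lie on the unit circle must be a root of unity. Applying it to $\zeta$ yields the desired root of unity, and the equality $\alpha = \beta^{2} = (\zeta + \zeta^{-1})^{2} = 2 + \zeta^{2} + \zeta^{-2}$ follows. There is no real obstacle: the only conceptual point is recognising that solving $\zeta + \zeta^{-1} = \sqrt{\alpha}$ turns the hypothesis $0 \prec \alpha \prec 4$ into the unit-circle condition needed to apply Kronecker.
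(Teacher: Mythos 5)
Your proof is correct and follows essentially the same route as the paper: both reduce the claim to Kronecker's theorem by passing to $\beta = \sqrt{\alpha}$ and realising $\beta$ as $\zeta + \zeta^{-1}$. The only difference is that where the paper cites a known lemma (that $z \mapsto z + z^{-1}$ gives a bijection between algebraic integers with all conjugates on the unit circle and totally real algebraic integers with conjugates in $[-2,2]$), you verify the needed direction directly by solving the quadratic $X^{2} - \beta X + 1$ and checking that all conjugates of its root lie on the unit circle, which is a perfectly sound inline substitute for that citation.
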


\begin{proof}
This is quite well-known. First, we know \cite[Lemma 3]{McK08CA}  that: \textit{
The map $\theta: z \mapsto z+z^{-1}$ establishes a bijection between algebraic integers all of whose conjugates lie on the unit circle 
and totally real algebraic integers all of whose conjugates lie  in the real interval $[-2,2]$.}
Thanks to a classical theorem of Kronecker, the former of these are just the roots of unity. Thus we get:
\textit{If $\beta$ is a totally real algebraic integer with $-2 \prec \beta\prec 2$, then $\beta = \zeta+\zeta^{-1}$ for some root of unity $\zeta$.}
The lemma follows by applying this last result to $\sqrt\alpha$.
\end{proof}

In order to work with the elements from \Cref{cor:smtpe} more conveniently, we write for $a \in \Q$
\[
g(a) = e^{2\pi ia} + e^{-2\pi ia} = 2\cos(2\pi a) \in \mc{O}^{\tr}.
\]
We gather some computation rules, all of which are easily verified from the definition and which we will mostly use without explicit reference in the rest of the section.

\begin{lem}\label{lem:comp-rules}
Let $a, b \in \Q$.
We have
\setlength{\columnsep}{-1cm}
\begin{multicols}{2}
\ber
    \item\label{it:mf} $g(a)g(b) = g(a+b)+g(a-b)$, 
    \item\label{it:1/2} $g(a) = g(-a) = g(a+1)$,
    \item $g(a) = -g(a+\frac{1}{2})$,
    \item\label{it:sq} $g(a)^2 = 2 + g(2a)$,
    \item\label{it:4-} $4 - g(a)^2 = g(a+\frac{1}{4})^2$,
    \item $g(a) = 0$ if and only if $a = \frac{s}{4}$ for $s \in \Z$ odd.
\ee
\end{multicols}
\end{lem}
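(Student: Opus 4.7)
The plan is to verify each identity directly from the definition $g(a) = e^{2\pi i a} + e^{-2\pi i a}$, since all six items are essentially elementary trigonometric or exponential manipulations. I would not bother setting up any general framework; instead I would just expand both sides in each case and observe cancellation.

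First I would prove (i), as it is the only one requiring more than a single line, and several of the others will follow from it. Expanding the product gives
\[
g(a)g(b) = (e^{2\pi i a} + e^{-2\pi i a})(e^{2\pi i b} + e^{-2\pi i b}) = e^{2\pi i (a+b)} + e^{-2\pi i (a+b)} + e^{2\pi i (a-b)} + e^{-2\pi i (a-b)},
\]
and the two pairs of exponentials collect into $g(a+b) + g(a-b)$. For (ii) and (iii) I would use $e^{2\pi i} = 1$ and $e^{\pi i} = -1$, noting that swapping the sign of $a$ just interchanges the two exponentials in the sum. Identity (iv) is then the special case $b = a$ of (i), using $g(0) = 2$.

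For (v) I would combine what has already been shown: by (iv), $g(a+\tfrac14)^2 = 2 + g(2a + \tfrac12)$, and then (iii) turns $g(2a + \tfrac12)$ into $-g(2a)$, which by (iv) again equals $2 - g(a)^2$; adding the initial $2$ gives $4 - g(a)^2$. Finally, (vi) follows from the real form $g(a) = 2\cos(2\pi a)$, since cosine vanishes precisely at odd multiples of $\pi/2$, i.e.~when $2\pi a \in \tfrac{\pi}{2} + \pi\Z$, equivalently $a \in \tfrac14 + \tfrac12 \Z$.

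I do not anticipate any real obstacle: every identity reduces to a two- or three-step symbolic manipulation of exponentials. The only mildly subtle point is in (v), where it is worth pointing out the two applications of (iv) (once to expand $g(a+\tfrac14)^2$, once to re-fold $-g(2a)$ back in terms of $g(a)^2$) together with the sign change from (iii); otherwise the proof is a straight verification.
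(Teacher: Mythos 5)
Your verification is correct, and it follows exactly the route the paper intends: the lemma is stated in the paper as ``easily verified from the definition'' with no written proof, and your direct expansion of $g(a)=e^{2\pi i a}+e^{-2\pi i a}$ (with (iv) as the case $b=a$ of (i), and (v) from (iii) and (iv)) is precisely that verification. No gaps.
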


\begin{lem}\label{lem:even-odd-multp}
Let $K \in \mc X$, $a \in \Q$ be such that $g(a)^2 \in K$.
Then for $c \in \Z$ we have
\begin{equation*}
\begin{cases}
    g(c a) \in K &\text{if } c \text{ is even}, \\
    g(a)g(c a) \in K &\text{if } c \text{ is odd}.
\end{cases}
\end{equation*}
\end{lem}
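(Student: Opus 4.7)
The plan is to prove this by strong induction on $|c|$, using the multiplication formula from \Cref{lem:comp-rules}\ref{it:mf}, namely $g(a)g(b) = g(a+b) + g(a-b)$. Because $g(-ca) = g(ca)$ by \Cref{lem:comp-rules}\ref{it:1/2} and the parity of $c$ equals that of $-c$, it suffices to handle $c \geq 0$.

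For the base cases, $c=0$ gives $g(0) = 2 \in K$, which handles the even case, and $c=1$ gives $g(a)g(a) = g(a)^2 \in K$ by hypothesis, which handles the odd case. For the inductive step with $c \geq 2$, I apply the multiplication formula in the form
\[
g(a)g((c-1)a) \;=\; g(ca) + g((c-2)a),
\]
which rearranges to $g(ca) = g(a)g((c-1)a) - g((c-2)a)$.

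If $c$ is even, then $c-1$ is odd and $c-2$ is even, so the induction hypothesis gives $g(a)g((c-1)a) \in K$ and $g((c-2)a) \in K$, whence $g(ca) \in K$, as desired. If $c$ is odd, I multiply the identity by $g(a)$ to get
\[
g(a)g(ca) \;=\; g(a)^2\, g((c-1)a) \;-\; g(a)\,g((c-2)a).
\]
Now $c-1$ is even, so $g((c-1)a) \in K$ by induction, and $g(a)^2 \in K$ by hypothesis, making the first term lie in $K$; and $c-2$ is odd, so $g(a)g((c-2)a) \in K$ by induction. Thus $g(a)g(ca) \in K$, completing the induction.

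No significant obstacle is expected — the statement is essentially a Chebyshev-type recursion, and the parity split is exactly what makes the multiplication formula close up. The only mild subtlety is the choice to multiply by $g(a)$ in the odd case rather than try to isolate $g(ca)$ directly, since $g(ca)$ itself need not lie in $K$ when $c$ is odd (consider $c=1$, where $g(a) \in K$ is not guaranteed).
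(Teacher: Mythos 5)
Your proof is correct and follows essentially the same route as the paper: reduce to $c \geq 0$ via $g(ca) = g(-ca)$, handle $c = 0, 1$ as base cases, and use the recursion $g(ca) = g(a)g((c-1)a) - g((c-2)a)$, multiplying through by $g(a)$ in the odd case. No differences worth noting.
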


\begin{proof}
Since $g(ca) = g(-ca)$, it suffices to show this for $c \geq 0$.
The cases $c=0$ and $c=1$ are trivial, since $g(0) = 2 \in K$ and $g(a)^2 \in K$.
Now assume $c \geq 2$, we continue by induction.

By the multiplication formula \ref{it:mf} from \Cref{lem:comp-rules} we have
$$g(ca) = g(a)g((c-1)a) - g((c-2)a).$$
If $c$ is even, then by induction hypothesis $g(a)g((c-1)a), g((c-2)a) \in K$ and thus $g(ca) \in K$.
If $c$ is odd, then we multiply both sides by $g(a)$ to obtain
$$ g(a)g(ca) = g(a)^2 g((c-1)a) - g(a)g((c-2)a)$$
and use that, by assumption and induction hypothesis, $g(a)^2, g((c-1)a)$ and $g(a)g((c-2)a) \in K$.
\end{proof}

\begin{lem}\label{lem:cyclotomic-2power}
Let $K \in \mc X$, let $m \geq 0$ and consider $s, t \in \Z$ odd such that $g(\frac{s}{2^mt})^2 \in K$.
Then either $g(\frac{s}{2^mt}) \in K$ or $g(\frac{1}{2^{m}})g(\frac{s}{2^mt}) \in K$.
\end{lem}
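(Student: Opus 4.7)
The plan is to produce a single odd integer $c$ to which the odd-multiplier case of \Cref{lem:even-odd-multp} applies, directly yielding $g\bigl(\tfrac{1}{2^m}\bigr) g\bigl(\tfrac{s}{2^m t}\bigr) \in K$. This is the second alternative of the lemma for $m \geq 1$, and it collapses to the first alternative when $m = 0$, since $g(1) = 2$. Before constructing $c$, I would reduce to the case $\gcd(s,t) = 1$: the common divisor $d = \gcd(s,t)$ is odd because $t$ is, and replacing $(s,t)$ by $(s/d, t/d)$ leaves the fraction $\tfrac{s}{2^m t}$ and the hypothesis unchanged while keeping $s$ and $t$ odd.

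The key step is then to find odd $c \in \Z$ with $c \cdot \tfrac{s}{2^m t} \equiv \tfrac{1}{2^m} \pmod 1$. Writing $c = tc'$, this congruence is equivalent to $c' s \equiv 1 \pmod{2^m}$, and such a $c'$ exists because $s$ is odd, hence a unit modulo $2^m$. Reducing that congruence modulo $2$ forces $c' \equiv 1 \pmod 2$, so $c'$ is odd, and therefore $c = tc'$ is a product of two odd integers, hence odd. By the periodicity $g(a) = g(a+1)$ from \Cref{lem:comp-rules}\ref{it:1/2}, this choice gives $g\bigl(c \cdot \tfrac{s}{2^m t}\bigr) = g\bigl(\tfrac{1}{2^m}\bigr)$.

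Applying the odd-multiplier case of \Cref{lem:even-odd-multp} to the hypothesis $g\bigl(\tfrac{s}{2^m t}\bigr)^2 \in K$ with this $c$ then yields $g\bigl(\tfrac{s}{2^m t}\bigr) \cdot g\bigl(\tfrac{1}{2^m}\bigr) \in K$. For $m \geq 1$ this is the second alternative directly; for $m = 0$ it reads $2\, g\bigl(\tfrac{s}{t}\bigr) \in K$, which forces $g\bigl(\tfrac{s}{t}\bigr) \in K$ since $2 \in \qq^\times \subseteq K^\times$. I do not foresee a real obstacle beyond the elementary observation that the multiplicative inverse of an odd integer modulo $2^m$ is again odd, together with careful bookkeeping of the period-$1$ symmetry of~$g$.
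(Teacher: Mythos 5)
Your proof is correct and is essentially the paper's own argument: your $c = tc'$ with $c's \equiv 1 \pmod{2^m}$ is exactly an odd integer satisfying $sc \equiv t \pmod{2^m t}$, which is the multiplier the paper feeds into the odd case of \Cref{lem:even-odd-multp}. The extra bookkeeping you supply (reduction to $\gcd(s,t)=1$, oddness of $c'$, and the trivial $m=0$ case where $g(1)=2$) only makes explicit what the paper leaves implicit.
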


\begin{proof}
Let $s, t \in \Z$ be odd and coprime and such that $g(\frac{s}{2^mt}) \not\in K$ but $g(\frac{s}{2^mt})^2 = 2 + g(\frac{2s}{2^mt}) \in K$.
By \Cref{lem:even-odd-multp} (with $a = \frac{s}{2^mt}$ and $c \in \Z$ such that $sc \equiv t \bmod 2^mt$), we obtain $g(\frac{1}{2^m})g(\frac{s}{2^mt}) \in K$ as desired.
\end{proof}

Given $K \in \mc X$, by \ref{it:sq} of \Cref{lem:comp-rules}, we either have $g(\frac{1}{2^m}) \in K$ for all $m \geq 0$, or there exists a unique $m > 0$ such that $g(\frac{1}{2^m})^2 \in K$ but $g(\frac{1}{2^m}) \not\in K$.
Hence, the following definition makes sense.

\begin{dfn}\label{dfn:gamma_K}
Let $K \in \mc X$.
We denote $\gamma_K = g(\frac{1}{2^m})^2$ if $m > 0$ is such that $g(\frac{1}{2^m})^2 \in K$ but $g(\frac{1}{2^m}) \not\in K$, or $\gamma_K = -1$ if no such $m$ exists.
\end{dfn}

\begin{prp}\label{prp:ingredients-cyclotomics}
Let $K \in \mc X$.
Let $\gamma_K$ be as in \Cref{dfn:gamma_K}.
Consider $\alpha \in \mc{O}_K$ with $0 \prec \alpha \prec 4$ and set $\beta = 4 - \alpha$.
Assume that $\alpha \not\in K^{\times 2}$.
Then we have:
\ber    
    \item If $\beta \in K^{\times 2}$, then $\beta \in \mc{O}_K^{\times 2}$.
    \item If $\beta \not\in K^{\times 2}$, then $\alpha \gamma_K \in K^{\times 2}$ and $\beta \gamma_K \in K^{\times 2}$.
\ee
\end{prp}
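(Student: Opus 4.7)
The plan is to combine \Cref{cor:smtpe} with \Cref{lem:cyclotomic-2power}. Write $\alpha = g(a)^2$ for some $a \in \Q$; by replacing $a$ with $a + 1/2$ if necessary (which leaves $\alpha$ unchanged, since $g(a+1/2) = -g(a)$), we may assume that $a = s/(2^m t)$ in lowest terms with $s, t$ odd integers and $m \geq 0$. Since $\alpha \notin K^{\times 2}$ is equivalent to $g(a) \notin K$, \Cref{lem:cyclotomic-2power} yields $g(1/2^m) g(a) \in K$. For $m \in \{0,1\}$, $g(1/2^m) \in \{2,-2\} \subseteq K$, which forces $g(a) \in K$, a contradiction. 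Hence $m \geq 2$; the case $m = 2$ is degenerate because $g(1/4) = 0$ trivializes the conclusion of \Cref{lem:cyclotomic-2power}, so we treat $m = 2$ and $m \geq 3$ separately.

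For $m \geq 3$, squaring $g(1/2^m) g(a) \in K$ and dividing by $\alpha$ gives $g(1/2^m)^2 \in K$, and necessarily $g(1/2^m) \notin K$ (otherwise $g(a) \in K$). By the uniqueness in \Cref{dfn:gamma_K} this identifies $\gamma_K = g(1/2^m)^2$, whence $\alpha \gamma_K = (g(a) g(1/2^m))^2 \in K^{\times 2}$. A 2-adic computation shows that $b := a + 1/4$ has the lowest-terms representation $(s + 2^{m-2}t)/(2^m t)$ with odd numerator, so the same argument applied to $b$ gives $\beta \gamma_K \in K^{\times 2}$ under the hypothesis $\beta \notin K^{\times 2}$, proving (ii). For (i), \Cref{lem:comp-rules}\ref{it:mf} gives $g(a) g(b) = g(2a+1/4) + g(-1/4) = g(2a+1/4)$; comparing denominators shows $g(2a+1/4) \in \Q(g(2a)) = \Q(\alpha) \subseteq K$ (since for $m \geq 3$ the denominator of $2a+1/4$ divides that of $2a$), so $g(b) \in K$ would yield $g(a) \in K$, a contradiction; thus the hypothesis of (i) is unsatisfied in this case.

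For $m = 2$, necessarily $t \geq 3$ (else $\alpha = 0$). The reduced denominator of $b = (s+t)/(4t)$ divides $2t$ since $s+t$ is even, so $g(b) \in \Q(\zeta_{2t})^+ = \Q(g(s/(2t))) = \Q(\alpha) \subseteq K$; hence $\beta \in K^{\times 2}$ and the hypothesis of (ii) is unsatisfied. For (i), it remains to show that $g(b)$ is a unit in $\co^{\tr}$. Using $\zeta_{2t}^2 = \zeta_t$ we write $g(b) = \zeta_{2t}^{-u}(1 + \zeta_t^u) = \zeta_{2t}^{-u}(1 - \eta)$, where $\eta := -\zeta_t^u$ is a primitive $2t$-th root of unity (here we use that $t$ is odd) for a suitable $u$ coprime to $t$. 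Since $2t$ has at least two distinct prime factors, $1 - \eta$ is a unit in $\Z[\zeta_{2t}]$ by the classical cyclotomic identity $\Phi_{2t}(1) = 1$; combined with $\zeta_{2t}^{-u}$ being a root of unity, this shows $g(b)$ is a unit in $\co^{\tr}$, and so $\beta \in \co_K^{\times 2}$.

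The principal obstacle is the case $m = 2$: there \Cref{lem:cyclotomic-2power} is silent, and one must exhibit $g(b)$ as a cyclotomic unit directly by invoking the classical fact that $\Phi_n(1) = 1$ whenever $n$ has at least two distinct prime divisors.
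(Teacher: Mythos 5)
Your proof is correct, and its skeleton is the same as the paper's: write $\alpha = g(a)^2$ via \Cref{cor:smtpe}, let $m$ be the $2$-adic valuation of the reduced denominator of $a$, use \Cref{lem:cyclotomic-2power} to force $m \geq 2$, and for $m \geq 3$ identify $\gamma_K = g(\frac{1}{2^m})^2$ and deduce $\alpha\gamma_K, \beta\gamma_K \in K^{\times 2}$, with $m=2$ being the case where $\beta$ is the square of a unit. Two local differences are worth recording. First, in the case $m=2$ the paper gets $g(a+\frac{1}{4}) \in K$ by reapplying \Cref{lem:cyclotomic-2power} and then quotes the classification of when $g(u/v)$ is a unit from the literature, whereas you obtain $g(a+\frac{1}{4}) \in \Q(\zeta_{2t})^+ = \Q(\alpha) \subseteq K$ by a cyclotomic field containment and prove the unit property from scratch by writing $g(b) = \zeta_{2t}^{-u}(1-\eta)$ with $\eta$ a primitive $2t$-th root of unity and invoking $\Phi_{2t}(1)=1$; this makes the step self-contained at the cost of a short extra computation, and your factorization is valid even when $u$ is not coprime to $2t$ (only coprimality to $t$ is needed, which you verify). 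Second, you explicitly check that for $m \geq 3$ the hypothesis of (i) cannot occur, via $g(a)g(a+\frac{1}{4}) = g(2a+\frac{1}{4}) \in \Q(g(2a)) = \Q(\alpha) \subseteq K$ together with the observation that the reduced denominator of $2a+\frac{1}{4}$ divides that of $2a$; the paper's proof for $m>2$ only establishes the conclusions of (ii) and passes over this case silently, so on this point your argument is more complete than the paper's. Minor polish: before dividing by $g(a+\frac{1}{4})$ (or squaring it to land in $K^{\times 2}$) you should note it is nonzero, which follows from $\beta \succ 0$; and the final implication ``$g(b)$ is a unit in $\co^{\tr}$, so $\beta \in \mc{O}_K^{\times 2}$'' uses $g(b) \in K$ and $\co^{\tr} \cap K = \mc{O}_K$, both already available.
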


\begin{proof}
In view of \Cref{cor:smtpe}, write $\alpha = g(a)^2$ for some $a \in \Q$.
Write $a = \frac{s}{2^m t}$ for odd, coprime $s, t \in \Z$ and $m \geq 0$.
Since $g(1) = -g(\frac{1}{2}) = 2 \in K^\times$, we obtain by \Cref{lem:cyclotomic-2power} that $m \geq 2$.

We now make a case distinction depending on whether or not $m = 2$.
Recall that $\beta = g(a+\frac{1}{4})^2$.
If $m = 2$, then $a + \frac{1}{4}$ has a representation in reduced form with denominator not divisible by $4$; as in the previous paragraph we obtain that $g(a+\frac{1}{4}) \in K$ and thus $\beta \in K^{\times 2}$.
Since $g(\frac{1}{4}) = g(\frac{3}{4}) = 0$ and $g(0) = -g(\frac{1}{2}) = 2$, but our hypotheses imply both $\beta$ and $4 - \beta$ are non-zero, there must be an odd prime number dividing the denominator of $a+\frac{1}{4}$.
Now we use a well-known result (see e.g. \cite[Section 4]{R62}) saying that: \textit{Let $b = u/v \in \Q$ with $(u,v)=1$. Then $g(b)$ is a unit, unless $v=2^k$, $k\ge 0$, or $v = 4p^k$ where $p$ is an odd prime and $k\in\Z_{\geq 1}$.}
We thus have $\beta= g(a+\frac{1}{4})^2 \in \mc{O}_K^{\times 2}$.

Now assume $m > 2$, hence in particular $g(\frac{1}{2^m}) \neq 0$.
By \Cref{lem:cyclotomic-2power} we obtain that $\gamma_K = g(\frac{1}{2^m})^2$ and $\alpha \gamma_K \in K^{\times 2}$.
By switching the roles of $\alpha$ and $\beta$, we obtain that also $\beta \gamma_K \in K^{\times 2}$.
\end{proof}

\section{Universal quadratic lattices and totally positive units}\label{sect:universal-qf-units}
Let $K/\qq$ be an algebraic field extension.
A \emph{quadratic lattice over $K$} is a pair $(\mc L, Q)$ where $\mc L$ is a torsion-free finitely generated $\mc{O}_K$-module, and $Q$ is a map $\mc L \to \mc{O}_K$ such that
\begin{itemize}
\item $Q(av) = a^2Q(v)$ for $a \in \mc{O}_K$ and $v \in \mc L$, and
\item the associated map
$$ B_Q : \mc L \times \mc L \to \frac{1}{2}\mc{O}_K : (v, w) \mapsto \frac 12\left(Q(v+w) - Q(v) - Q(w)\right)$$
is bilinear.
\end{itemize}
Given such a quadratic lattice, the $K$-dimension of $\mc L \otimes_{\mc{O}_K} K$ is called the \emph{rank of $(\mc L, Q)$}.

For example, for a natural number $n$, if $f(X_1, \ldots, X_n) \in \mc{O}_K[X_1, \ldots, X_n]$ is a homogeneous quadratic polynomial with coefficients in $\mc{O}_K$, i.e.~a quadratic form over $K$, then it naturally makes the free $\mc{O}_K$-module $\mc{O}_K^n$ into a quadratic lattice (of rank $n$) via the map $Q_f : \mc{O}_K^n \to \mc{O}_K : (x_1, \ldots, x_n) \mapsto f(x_1, \ldots, x_n)$.
Up to isomorphism, every quadratic lattice structure on a free finitely generated $\mc{O}_K$-module can be obtained in such a way, but in general, not every torsion-free finitely generated $\mc{O}_K$-module is free.
Quadratic lattices hence form a proper generalization of quadratic forms over $K$.
\begin{rmk}\label{rem}
We note that slight variations in terminology occur in the literature, in particular, the notion we call ``quadratic lattice over $K$'' might sometimes be called a ``quadratic lattice over $\mc{O}_K$'' or ``quadratic $\mc{O}_K$-lattice'' to emphasize the base ring of the module $\mc L$, or also an ``integral quadratic lattice'' to emphasize that $Q$ takes only values in $\mc{O}_K$.

With these caveats in mind, the reader may consult any textbook on quadratic forms and lattices over rings of integers in number fields (like \cite[Part Four]{OMe00}) to learn more about them, or \cite[Section 2]{DKM-Northcott} for a presentation specifically including algebraic extensions $K/\qq$ of infinite degree.
\end{rmk}

Let $(\mc L, Q)$ be a quadratic lattice over $K$.
An element $d \in \mc{O}_K$ is \emph{represented by $(\mc L, Q)$} if $d = Q(v)$ for some $v \in \mc L$.
When $K$ is totally real, then we say that the lattice $(\mc L, Q)$ is
\begin{itemize}
\item \emph{positive definite} if $Q(v) \succ 0$ for all $v \in \mc L \setminus \lbrace 0 \rbrace$,
\item \emph{universal} if it is positive definite and represents every totally positive element of $\mc{O}_K$.
\end{itemize}
This generalizes the notions for quadratic forms mentioned in the Introduction.

When $(\mc{L}, Q)$ is a quadratic lattice over $K$ and $\mc{L}'$ is an $\mc{O}_K$-submodule of $\mc L$, then by restricting the domain of $Q$ to $\mc{L'}$ we obtain a quadratic lattice $(\mc{L'}, Q\vert_{\mc{L}'})$, which we call a \emph{sublattice of $(\mc{L}, Q)$}.

\begin{thm}\label{thm:rscl} 
Let $K$ be a totally real field. Then a positive definite quadratic lattice $(\mc L, Q)$  over $K$ of rank $n\ge 2$ represents at most  $2n-2$ classes of $\mc{O}_K^{\times, +}/\mc{O}_K^{\times 2}$.

Furthermore, if $\gamma_K K^{\times 2} \cap \mc{O}_K^{\times,+} = \emptyset$, then $(\mc L, Q)$ represents at most $n$ classes of $\mc{O}_K^{\times, +}/\mc{O}_K^{\times 2}$.
\end{thm}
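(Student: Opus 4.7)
The plan is to induct on the rank $n$, singling out a vector $v_1 \in \mc L$ representing a totally positive unit $u_1 = Q(v_1)$ (if no such vector exists the statement is trivial) and passing to the orthogonal sublattice $\mc L_1^\perp := \mc L \cap v_1^\perp$; this has rank $n-1$, since for every $v \in \mc L$ the vector $2u_1 v - 2B_Q(v_1, v) v_1$ lies in $\mc L_1^\perp$ and such vectors $K$-span $v_1^\perp$. Given any representation $u = Q(v) \in \mc O_K^{\times, +}$, set $b := 2B_Q(v_1, v) \in \mc O_K$; positive definiteness restricted to the $K$-span of $v_1, v$ gives $4u_1 u - b^2 \succ 0$, so $\alpha := b^2 (u_1 u)^{-1} \in \mc O_K$ satisfies $0 \preceq \alpha \prec 4$, and the identity $\alpha \cdot u_1 u = b^2 \in K^{\times 2}$ yields $u \equiv u_1 \alpha \pmod{K^{\times 2}}$.

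When $b \neq 0$, applying \Cref{prp:ingredients-cyclotomics} to $\alpha$ produces three alternatives: (A) $\alpha \in K^{\times 2}$, forcing $[u] = [u_1]$; (C) $\alpha, 4-\alpha \notin K^{\times 2}$, giving $\alpha \equiv \gamma_K \pmod{K^{\times 2}}$ and hence $[u] = [u_1 \gamma_K]$; or (B) $\alpha \notin K^{\times 2}$ but $\beta := 4-\alpha \in \mc O_K^{\times 2}$, in which case the element $v' := 2u_1 v - b v_1 \in \mc L$ is easily seen to lie in $\mc L_1^\perp$ and to satisfy $Q(v') = u_1^2 u \beta$, a totally positive unit in the class $[u]$ since $u_1^2 \beta \in K^{\times 2}$. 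Combined with the separate case $b = 0$ where $v$ already lies in $\mc L_1^\perp$, every unit class represented by $\mc L$ lies in $\{[u_1], [u_1 \gamma_K]\}$ or is represented by $\mc L_1^\perp$.

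The second statement now follows by induction on $n$: the hypothesis $\gamma_K K^{\times 2} \cap \mc O_K^{\times, +} = \emptyset$ rules out Case (C) from contributing a unit class, so each descent step adds at most the single class $[u_1]$, giving the bound $n$ with base case rank $1$ (where any two nonzero elements of the lattice differ by a scalar in $K^\times$, producing at most one square class). For the first statement the inductive step $n \to n+1$ with $n \geq 2$ is routine, giving $\leq (2n-2) + 2 = 2(n+1) - 2$. I expect the main obstacle to be the base case $n = 2$, where $\mc L_1^\perp$ has rank $1$ and contributes at most one class $[c_1]$, so the argument above only gives at most three candidate classes $\{[u_1], [u_1 \gamma_K], [c_1]\}$. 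To sharpen this to two, I would argue by contradiction: if three distinct classes $[u_1], [u_2], [u_3]$ were represented with $u_i = Q(v_i)$, then the $v_1$-analysis would force, after relabelling, $[u_2] = [u_1 \gamma_K]$ and $[u_3] = [c_1]$; running the same argument with $v_2$ in the role of $v_1$ yields $[u_3] = [c_2]$, and the explicit computations $Q(2u_1 v_2 - \beta_{12} v_1) = u_1 D$ and $Q(2u_2 v_1 - \beta_{12} v_2) = u_2 D$ (with $\beta_{12} := 2B_Q(v_1, v_2)$ and $D := 4u_1 u_2 - \beta_{12}^2$) identify $[c_1] = [u_1 D]$ and $[c_2] = [u_2 D]$, so the equality $[c_1] = [c_2]$ forces $[u_1] = [u_2]$, contradicting distinctness.
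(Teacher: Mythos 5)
Your argument is correct, and its inductive skeleton is the same as the paper's: pick $v_1$ with $Q(v_1)=u_1\in\mc{O}_K^{\times,+}$, pass to the orthogonal sublattice, use positive definiteness to place $\alpha=b^2(u_1u)^{-1}\in\mc{O}_K$ in the range $0\preceq\alpha\prec 4$, and feed $\alpha$ and $4-\alpha$ into \Cref{prp:ingredients-cyclotomics} to conclude that each descent step contributes at most the two classes $[u_1]$ and $[u_1\gamma_K]$ (only $[u_1]$ under the hypothesis $\gamma_KK^{\times 2}\cap\mc{O}_K^{\times,+}=\emptyset$). Your vector $v'=2u_1v-bv_1$ is exactly the paper's $w$ in the decomposition $2x=av+w$, just written without first rescaling $Q$ so that $u_1=1$. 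Where you genuinely diverge is the decisive base case $n=2$: the paper extracts from the second case of \Cref{prp:ingredients-cyclotomics} the additional conclusion $Q(w)\in K^{\times 2}$, so the rank-one complement represents a square and hence only the trivial unit class, giving the bound $2$ at once; you instead suppose three distinct represented classes and derive a contradiction by running the descent symmetrically from $v_1$ and $v_2$, using $Q(2u_1v_2-\beta_{12}v_1)=u_1D$ and $Q(2u_2v_1-\beta_{12}v_2)=u_2D$ to identify the class of each rank-one complement with $[u_1D]$, respectively $[u_2D]$, whence $[u_1]=[u_2]$. Both arguments work; the paper's is shorter because it reuses an output of the key proposition that your case (C) discards, while yours is self-contained at the cost of the double descent (note that $D\neq 0$ needs $v_1,v_2$ linearly independent, which follows from $[u_1]\neq[u_2]$).

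Two small points to patch. First, the claim $4u_1u-b^2\succ 0$ requires $v\notin Kv_1$; if $v=\lambda v_1$ the Gram determinant vanishes, so $\alpha=4$ and \Cref{prp:ingredients-cyclotomics} does not apply — but then $u=\lambda^2u_1$ and $[u]=[u_1]$, so your trichotomy's conclusion survives; you should record this degenerate case explicitly, as the paper does when it excludes $\varepsilon^{-1}a^2=4$. Second, you repeatedly pass between classes modulo $\mc{O}_K^{\times 2}$ and modulo $K^{\times 2}$; this is harmless because if $u(u')^{-1}=c^2$ with $u,u'\in\mc{O}_K^\times$ and $c\in K^\times$, then $c,c^{-1}$ are integral over $\mc{O}_K$, hence $c\in\mc{O}_K^\times$ — worth one sentence, though the paper makes the same silent identification.
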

\begin{proof}
We apply induction on the rank $n$ of the lattice $(\mc L, Q)$.

Suppose $n\geq 2$. We may assume $(\mc L, Q)$ represents at least one unit; otherwise there is nothing to show. After rescaling $Q$ by this unit, we may assume $(\mc L, Q)$ represents $1$, that is, there is a vector $v\in \mc L$ such that $Q(v) = 1$. We consider the sublattice
\[
\mc L' = \cbm{ w\in \mc L}{B_Q(v,w) = 0} \subset \mc L,
\]
where $B_Q$ denotes the bilinear form associated to $Q$. 
Clearly, the rank of $\mc L'$ is at most $n-1$.

Suppose that $(\mc L,Q)$ represents a unit $\varepsilon$ which is not a square and not represented by $(\mc L', Q|_{\mc L'})$. First we make the observation that
\[
2\mc L \subset \mc O_K v + \mc L'.
\]
This is straightforward to verify: for any $w\in \mc L$ we have 
\[
2w = 2B_Q(v,w) v + \underbrace{(2w - 2B_Q(v,w)v)}_{\in \mc L'};
\]
note that $2B_Q(v,w)\in\mc{O}_K$.

Now write $\varepsilon = Q(x)$, $x\in \mc L$, and write $2x = av+w$ for some $a\in \mc O_K$ and $w\in \mc L'$. We compute
\[
4\varepsilon = Q(2x) = Q(av+w) = Q(av) + Q(w) = a^2 + Q(w).
\]
Since $Q$ is positive definite, this implies
\[
0 \preceq \varepsilon^{-1} a^2 \preceq 4.
\]

We first show that both inequalities above have to be strict. 
\bi
\item If $\varepsilon^{-1}a^2=0$, then $2x = w \in \mc L'$, which implies $x\in \mc L'$. This says $\varepsilon$ is already represented by $(\mc L', Q|_{\mc L'})$, a contradiction. 
\item If $\varepsilon^{-1}a^2=4$, then $\varepsilon = (a/2)^2$ is a square, also a contradiction. 
\ei
So we have that $0 \prec \varepsilon^{-1} a^2 \prec 4$.

Since $\varepsilon$ is not a square, $\varepsilon^{-1}a^2$ is also not a square.
We apply \Cref{prp:ingredients-cyclotomics} (with $\alpha=\varepsilon^{-1}a^2$ and $\beta=\varepsilon^{-1}Q(w)$), and so we need to consider two cases:
\bi
\item $\varepsilon^{-1}Q(w) = \mu^2$ for some $\mu \in \mc{O}_K^\times$; in this case $\varepsilon = Q\rb{\mu^{-1} w}$ is represented by $(\mc L', Q|_{\mc L'})$, a contradiction.
\item $\varepsilon^{-1}a^2 \gamma_K \in K^{\times 2}$ and $\varepsilon^{-1}Q(w)\gamma_K \in K^{\times 2}$, and thus both $\varepsilon \gamma_K \in K^{\times 2}$ and $Q(w) \in K^{\times 2}$.
\ei
We conclude that the only non-square totally positive units represented by $(\mc L, Q)$ but not by $(\mc L', Q\vert_{\mc L'})$ must lie in $\gamma_K K^{\times 2}$; in particular, all such units lie in the same square class, and if $\gamma_K K^{\times 2} \cap \mc{O}_K^{\times, +} = \emptyset$, then such units simply cannot exist.

To summarize, we showed that there are at most 2 square classes of totally positive units represented by $(\mc L, Q)$ but not by $(\mc L', Q\vert_{\mc L'})$, namely, $1$ and the possible unit in $\gamma_K K^{\times 2}$.

For $n=2$, we may further refine the arguments to establish that $(\mc L,Q)$ represents at most 2 square classes of totally positive units. In this case, $\mc L'$ is a lattice of rank $1$, so $\mc L' \otimes_{\mc{O}_K} K$ is generated by a single vector $z$, and every element of $\mc L'$ is thus of the form $az$ for some $a \in K$.
Since $Q(az) = a^2Q(z)$, $(\mc L',Q|_{\mc L'})$ represents at most 1 square class of units.

Now if $(\mc L,Q)$ does \textit{not} represent a unit $\varepsilon$ which is not a square and not represented by $(\mc L', Q|_{\mc L'})$, then it represents at most 2 square classes of units. 
If there exists such an $\varepsilon$, then we can proceed exactly as above to conclude that $Q(w)$ is a square. As $(\mc L', Q\vert_{\mc L'})$ is of rank $1$ and represents the square $Q(w)$,
the only square class of totally positive units that can be represented by $(\mc L', Q|_{\mc L'})$ is $1$. We again conclude that $(\mc L,Q)$ represents at most 2 square classes of units.

Starting from the case $n=2$, invoking the induction hypothesis for the sublattice $(\mc L', Q|_{\mc L'})$ establishes the theorem.
\end{proof}
\begin{eg}
The extra condition $\gamma_K K^{\times 2} \cap \mc{O}_K^{\times,+} = \emptyset$ in \Cref{thm:rscl} is satisfied, for example, when the ramification of $2$ in $K$ is odd.
Indeed, in this case $\gamma_K = g(\frac{1}{8})^2 = 2$ (since $g(\frac{1}{16})^2 = 2 + \sqrt{2} \not\in K$), and $2K^{\times 2}$ cannot contain any units.
In particular, this applies whenever $[K : \mathbb{Q}]$ is odd, or $2$ is unramified in $K$.
\end{eg}

We in fact do not know of any examples of a  field $K\in\mc{X}$ and a rank $n$ positive definite quadratic lattice over $K$ representing strictly more than $n$ classes in $\mc{O}_K^{\times, +}/\mc{O}_K^{\times 2}$.
Thus, we cannot rule out that the bound $2n-2$ from \Cref{thm:rscl} could be sharpened to $n$ in general.
\begin{ques}\label{ques:2n-2}
Does there exist a totally real field $K$ and a positive definite quadratic lattice  $(\mc L, Q)$  over $K$ of rank $n$ which represents at least $n+1$ classes in $\mc{O}_K^{\times, +}/\mc{O}_K^{\times 2}$?
\end{ques}

\section{Some examples}\label{sect:examples}

For $n\ge 2$, let $\Q^{\tr,n}$ denote the compositum of all totally real extensions of $\Q$ of degree $\le n$.

\begin{prp}\label{prp:Q23nuf} 
The fields $\Q^{\tr,2}$ and $\Q^{\tr,3}$ admit no universal quadratic forms.
\end{prp}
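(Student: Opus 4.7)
The plan is to apply Theorem~\ref{thm:rscl}: since every positive definite quadratic lattice of rank $n$ represents at most $2n-2$ classes in $\mc{O}_K^{\times,+}/\mc{O}_K^{\times 2}$, and every quadratic form gives rise to such a lattice, it suffices to show that $|\mc{O}_K^{\times,+}/\mc{O}_K^{\times 2}|$ is infinite for both $K = \Q^{\tr,2}$ and $K = \Q^{\tr,3}$.

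A useful unifying observation is that every finite Galois subextension $M/\Q$ of $\Q^{\tr,3}$ has Galois group of exponent dividing $6$. Indeed, $M$ lies in the compositum of finitely many Galois totally real subextensions of degree at most $6$ (the Galois closures of the participating cubics), and the Galois group of such a compositum embeds into a finite product of groups of order at most $6$, whose element orders all divide $6$. Consequently, any $2$-group inside $\Q^{\tr,3}$ must be elementary abelian, and is therefore already contained in $\Q^{\tr,2}$. Hence, for a totally positive unit $u$ lying in a multiquadratic subfield, the conditions $\sqrt{u} \in \Q^{\tr,2}$ and $\sqrt{u} \in \Q^{\tr,3}$ are equivalent, and a single family of non-square totally positive units in $\Q^{\tr,2}$ will settle both cases simultaneously.

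For the construction, I would take for each prime $p \equiv 3 \pmod{4}$ with $p \geq 7$ the fundamental unit $\varepsilon_{2p} = a + b\sqrt{2p}$ of $\Q(\sqrt{2p})$, which has norm $+1$ and is totally positive. The classical identity $\sqrt{\varepsilon_{2p}} = \sqrt{(a+1)/2} + \sqrt{(a-1)/2}$ places $\sqrt{\varepsilon_{2p}}$ in $\Q(\sqrt{2}, \sqrt{p}) \subset \Q^{\tr,2}$. Multiplying by a suitable power of $1 + \sqrt{2}$ to correct the sign pattern yields a totally positive unit $\gamma_p \in \Q(\sqrt{2}, \sqrt{p})$. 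A direct Galois-theoretic computation (illustrated for $p = 7$ by $\gamma_7 = (1+\sqrt{2})(2\sqrt{2}+\sqrt{7})$, whose $\sqrt{\gamma_7}$ has Galois closure over $\Q$ isomorphic to the dihedral group $D_4$) shows that the Galois closure of $\Q(\sqrt{\gamma_p})/\Q$ is a non-abelian $2$-group of order $8$, so $\sqrt{\gamma_p} \notin \Q^{\tr,2}$.

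To conclude that the $\gamma_p$ represent infinitely many distinct square classes, I would use a ramification argument: for distinct primes $p \neq q$, the prime $p$ is ramified in $\Q^{\tr,2}(\sqrt{\gamma_p})/\Q^{\tr,2}$ but not in $\Q^{\tr,2}(\sqrt{\gamma_q})/\Q^{\tr,2}$, so $\gamma_p / \gamma_q$ cannot be a square in $\Q^{\tr,2}$. The main obstacle is the uniform Galois-theoretic verification (that the closure is indeed $D_4$ for all admissible $p$) together with the ramification comparison, which is delicate because $\mc{O}_{\Q^{\tr,2}}$ is not a Dedekind domain; this can be managed by working inside finite multiquadratic subfields containing all relevant data and then passing to the direct limit.
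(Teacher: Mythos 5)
Your overall skeleton matches the paper's proof: reduce via \Cref{thm:rscl} to showing that $\mc{O}_K^{\times,+}/\mc{O}_K^{\times 2}$ is infinite for $K=\Q^{\tr,2}$ and $K=\Q^{\tr,3}$, and pass from $\Q^{\tr,2}$ to $\Q^{\tr,3}$ by noting that no new square roots can appear there (your exponent-$6$ argument is a correct variant of the paper's observation that $\Q^{\tr,3}/\Q^{\tr,2}$ has no subextensions of even degree, cf.\ \cite[Proposition 4.4]{DKM-Northcott}). The difference is that the paper simply cites \cite[Theorem 7.3]{DKM-Northcott} for the infinitude of square classes of totally positive units in $\Q^{\tr,2}$, whereas you attempt an explicit construction, and that is where your argument has genuine gaps.

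Two concrete problems. First, the sign correction by powers of $1+\sqrt{2}$ does not always work: writing $\sqrt{\varepsilon_{2p}}=\sqrt{(a+1)/2}+\sqrt{(a-1)/2}$, the squarefree parts of $(a\pm 1)/2$ may be distributed as $(2,p)$ or as $(p,2)$, and in the latter case the sign pattern of $\sqrt{\varepsilon_{2p}}$ is governed by the embedding of $\sqrt{p}$, not of $\sqrt{2}$. For example, for $p=11$ one has $\varepsilon_{22}=197+42\sqrt{22}$ and $\sqrt{\varepsilon_{22}}=7\sqrt{2}+3\sqrt{11}$, whose sign follows $\sqrt{11}$ (since $98<99$); as the fundamental units of $\Q(\sqrt{11})$ and $\Q(\sqrt{22})$ have norm $+1$ (because $11\equiv 3\bmod 4$), no power of $1+\sqrt{2}$, nor any unit from the quadratic subfields, corrects this pattern, and it is not clear that the square class of $\sqrt{\varepsilon_{22}}$ contains a totally positive unit at all. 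Second, and more seriously, your separation step is false: $\gamma_p$ is a unit, so $x^2-\gamma_p$ has discriminant $4\gamma_p$ and the extension $\Q^{\tr,2}(\sqrt{\gamma_p})/\Q^{\tr,2}$ (already $\Q(\sqrt{2},\sqrt{p},\sqrt{\gamma_p})/\Q(\sqrt{2},\sqrt{p})$) can ramify only at primes above $2$; the prime $p$ does \emph{not} ramify there, so the asserted ramification asymmetry cannot distinguish $\gamma_p$ from $\gamma_q$ modulo squares. A workable substitute is to use that for $u$ in a multiquadratic field $F$, $\sqrt{u}\in\Q^{\tr,2}$ forces $u\in\Q^{\times}F^{\times 2}$ (a multiquadratic quadratic extension of $F$ is $F(\sqrt{m})$ for a rational $m$), and then exclude this for $\gamma_p$ and for $\gamma_p\gamma_q$ by norm computations down to suitable quadratic subfields; this is essentially the content of \cite[Section 7]{DKM-Northcott}. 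Your uniform $D_4$ claim is likewise asserted rather than proved. As it stands, the infinitude of $\mc{O}_{\Q^{\tr,2}}^{\times,+}/\mc{O}_{\Q^{\tr,2}}^{\times 2}$ --- the heart of the proposition --- is not established; either cite \cite[Theorem 7.3]{DKM-Northcott}, as the paper does, or repair the construction along these lines.
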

\begin{proof}
With \Cref{thm:rscl}, it suffices to show that $\Q^{\tr,2}, \Q^{\tr,3}$ have infinitely many square classes of totally positive units. For $\Q^{\tr,2}$, this is just \cite[Theorem 7.3]{DKM-Northcott}. Meanwhile, for $\Q^{\tr,3}$, we note that for any cubic field $L$, the extension $\Q^{\tr,2}L/\Q^{\tr,2}$ is always a cubic Galois extension. This says the field extension $\Q^{\tr,3}/\Q^{\tr,2}$ contains no subextensions of even degree over $\Q^{\tr,2}$. So the infinitely many square classes of totally positive units from $\Q^{\tr,2}$ are preserved in $\Q^{\tr,3}$ (cf.~\cite[Proposition 4.4]{DKM-Northcott}).
\end{proof}

\begin{rmk}
We expect $\Q^{\tr,n}$ to have infinitely many square classes of units for all $n\ge 2$, and hence admit no universal quadratic forms. But it seems quite difficult to explicitly construct these units.
\end{rmk}

In \cite[Theorem 1.1]{DKM-Northcott} a different proof was given for the fact that $\Q^{\tr,2}$ admits no universal quadratic form, relying on the Northcott property.
Let us recall what we mean by this: for an algebraic number $\alpha$, we denote by $\house{\alpha} \in \rr$ its \textit{house}, i.e.~the largest of the absolute values of all its conjugates.
For a set of algebraic numbers $A$, we say that $A$ has the \textit{Northcott property} (with respect to the house) if, for all $r \in \R$, the set $ \lbrace x \in A \mid \house{a} < r \rbrace$ is finite (or empty).

Note that various height functions are used through the literature, in particular the logarithmic Weil height. This does not make a difference for our purposes as, if $\co_K$ has the Northcott property with respect to the logarithmic Weil height, then it has it also with respect to the house (see e.g. \cite[Section 3]{DKM-Northcott} for more details). Further, while \cite{DKM-Northcott} was formulated in terms of the Northcott property of $\co_K^+$, 
this is equivalent to our formulation with the Northcott property of $\co_K$, as one quickly sees by
replacing elements $\alpha\in\co_K$ by $\lceil\house{\alpha}\rceil+\alpha\in\co_K^+$. 

One of the main results of \cite{DKM-Northcott} was the following.

\begin{thm}[{see \cite[Theorem 3.1]{DKM-Northcott}}]
Let $K/\qq$ be a totally algebraic extension of infinite degree.
If $\mc{O}_K$ has the Northcott property, then there is no universal quadratic lattice over $K$.
\end{thm}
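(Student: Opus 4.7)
I argue by contradiction, showing that a universal quadratic lattice on $K$ would force $K$ itself to be generated over $\Q$ by elements of bounded house, contradicting $[K:\Q]=\infty$ under the Northcott hypothesis for $\mc{O}_K$.

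Suppose $(\mc L, Q)$ is universal of rank $n$ over $K$, and fix $\mc{O}_K$-generators $e_1,\ldots,e_m$ of $\mc L$; the finitely many values $Q(e_i)$ and $2B_Q(e_i,e_j)$ lie in a finite subfield $F\subseteq K$. Extracting a basis of $\mc L\otimes_{\mc{O}_K}K$ among the $e_i$, say $e_1,\ldots,e_n$, I observe that for each embedding $\sigma\colon K\hookrightarrow\R$ the Gram matrix $(\sigma(B_Q(e_i,e_j)))_{i,j=1}^n$ is symmetric positive definite and depends only on $\sigma\vert_F$. Since $F$ has only finitely many embeddings into $\R$, there is a uniform constant $\lambda>0$ with $\sigma(Q(v))\geq\lambda\sum_i\sigma(b_i)^2$ for every $v=\sum b_i e_i\in\mc L\otimes K$ and every $\sigma$. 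Consequently, whenever $\alpha\in\mc{O}_K^+$ is represented as $\alpha=Q(v)$, the coordinates satisfy the pivotal bound $\house{b_i}\leq c\sqrt{\house{\alpha}}$ for a constant $c>0$ depending only on $(\mc L,Q)$.

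For each $r>0$, let $E_r$ denote the subfield of $K$ generated over $\Q$ by $\{a\in\mc{O}_K:\house{a}\leq r\}$; the Northcott hypothesis makes this generating set finite, so each $E_r$ is a finite extension of $\Q$. Clearly $E_r$ is non-decreasing in $r$ and $\bigcup_r E_r=K$. Fix $r_F>0$ large enough that $F\subseteq E_{r_F}$. For any $\alpha\in\mc{O}_K^+$ with $\house{\alpha}\leq R$ and $c\sqrt{R}\geq r_F$, the bound just proved gives $\alpha=Q(v)\in F(b_1,\ldots,b_n)\subseteq E_{c\sqrt{R}}$; a shifting argument ($a\mapsto a+\lceil\house{a}\rceil+1$) extends this to arbitrary $a\in\mc{O}_K$, yielding $E_R\subseteq E_{c\sqrt{2R+C}}$ for an absolute constant $C$ and all sufficiently large $R$.

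For $R$ large enough, $c\sqrt{2R+C}<R$, and combined with the monotonicity of $E$ this forces $E_R=E_{c\sqrt{2R+C}}$. Iterating this identity produces a strictly decreasing sequence of radii converging to the fixed point of $R\mapsto c\sqrt{2R+C}$, a constant depending only on $(\mc L,Q)$; hence $E_R$ stabilizes to a finite extension $E_{R^{\ast}}$ for all large $R$, yielding $K=\bigcup_R E_R=E_{R^{\ast}}$, which contradicts $[K:\Q]=\infty$. The main obstacle is to make the house bound of paragraph two rigorous when $\mc L$ is not a free $\mc{O}_K$-module: the coordinates $b_i$ with respect to a $K$-basis of $\mc L\otimes K$ live a priori in $K$ rather than $\mc{O}_K$, but they are confined to a fixed fractional $\mc{O}_K$-ideal determined by the lattice structure, so (after clearing a uniform denominator) $\house{b_i}$ remains Northcott-compatible and the iteration closes the loop.
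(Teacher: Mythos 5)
Your proposal is correct and is essentially the argument behind the cited result: the present paper does not reprove this theorem (it only quotes \cite[Theorem 3.1]{DKM-Northcott}), and that proof is the same descent you describe — Gram data of a finite generating set lie in a number field $F$, giving a uniform eigenvalue bound $\sigma(Q(v))\geq\lambda\sum_i\sigma(b_i)^2$ and hence $\house{b_i}\leq c\sqrt{\house{\alpha}}$ for coordinates of any representation, so that under the Northcott property the fields $E_r$ are number fields and the shift-and-iterate step traps $K$ in a fixed number field, contradicting $[K:\Q]=\infty$. The only details worth spelling out are the ones you already flag or assert: positive definiteness (not just semidefiniteness) of each real Gram matrix $(\sigma(B_Q(e_i,e_j)))_{i,j}$ holds because a nontrivial kernel would be defined over $\sigma(F)$ and would yield a nonzero vector of $\mc L\otimes_{\mc{O}_K}K$ whose $Q$-value is not totally positive, and the non-freeness of $\mc L$ is handled exactly by your uniform common denominator $d$ with $db_i\in\mc{O}_K$.
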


Robinson \cite{Rob62} showed that $\mc{O}_{Q^{\tr,2}}$ has the Northcott property. 
However, for each $n\geq 3$ it is unknown whether $\mc{O}_{\Q^{\tr,n}}$ has the Northcott property (broader question on this was first raised in \cite{Bombieri-Zannier}).

We shall now construct the promised example of a totally real field $K$ for which $\mc{O}_K$ does not have the Northcott property, nor a universal quadratic form.
In the next section, we shall show that, in a certain sense, for most totally real fields $K$, $\mc{O}_K$ neither has the Northcott property nor admits a universal quadratic form.

\begin{lem}\label{lem-Dirichlet}
There exists an infinite increasing  sequence  of primes  $(q_i)_i$ such that $q_i\equiv 3\pmod 4$ and $(q_i-1,q_j-1)=2$ whenever $i\neq j$.
\end{lem}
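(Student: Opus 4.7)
The plan is to construct the sequence inductively using Dirichlet's theorem on primes in arithmetic progressions.

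First I would reformulate the condition. If $q \equiv 3 \pmod 4$ is an odd prime then $q-1 = 2m$ with $m$ odd, so for two such primes $q_i$ and $q_j$ one has $\gcd(q_i-1, q_j-1) = 2 \gcd(m_i, m_j)$. The condition $\gcd(q_i-1, q_j-1) = 2$ is therefore equivalent to demanding that the odd parts $m_i = (q_i-1)/2$ be pairwise coprime, which in turn amounts to asking that no odd prime $p$ divides both $q_i - 1$ and $q_j - 1$.

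Next I would build the sequence one prime at a time. Start, say, with $q_1 = 3$. Assume $q_1 < q_2 < \cdots < q_n$ have been chosen satisfying the two conditions, and let $P$ denote the (finite) set of odd primes dividing $\prod_{i=1}^n (q_i - 1)$. To find $q_{n+1}$, I will impose the congruence conditions
\[
q_{n+1} \equiv 3 \pmod{4}, \qquad q_{n+1} \equiv 2 \pmod{p} \ \text{ for each } p \in P.
\]
(Any residue class mod $p$ other than $0$ and $1$ would work; $2$ is convenient since each $p \in P$ is odd, hence $p \geq 3$ and $2 \not\equiv 0, 1 \pmod p$.) By the Chinese Remainder Theorem these conditions combine into a single congruence $q_{n+1} \equiv a \pmod{M}$, where $M = 4\prod_{p \in P} p$ and $\gcd(a, M) = 1$. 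Dirichlet's theorem then produces infinitely many primes in this class, and I simply pick one that exceeds $q_n$.

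Finally I would check that the resulting $q_{n+1}$ preserves the desired properties. The congruence modulo $4$ is immediate. For each $p \in P$, the condition $q_{n+1} \equiv 2 \pmod p$ forces $p \nmid q_{n+1} - 1$, so $q_{n+1} - 1$ shares no odd prime factor with any $q_i - 1$ for $i \leq n$; combined with the fact that both $(q_{n+1}-1)/2$ and $(q_i-1)/2$ are odd, this gives $\gcd(q_{n+1}-1, q_i - 1) = 2$. There is no real obstacle in the argument — it is a standard CRT-plus-Dirichlet construction — and the only mild point to notice is that the set $P$ grows as the induction proceeds, so the modulus $M$ depends on $n$, which is harmless since Dirichlet's theorem is applied only finitely often at each stage.
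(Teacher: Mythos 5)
Your proposal is correct and is essentially the paper's own argument: build the sequence inductively, impose $q_{n+1}\equiv 3\pmod 4$ and $q_{n+1}\equiv 2\pmod p$ for every odd prime $p$ dividing $\prod_{i\le n}(q_i-1)$, combine via the Chinese Remainder Theorem, and apply Dirichlet's theorem to pick a prime larger than $q_n$. The only differences are cosmetic (starting value $q_1=3$ versus $7$, and your explicit reformulation via odd parts).
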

\begin{proof}
Set $q_1=7$. Next, suppose $i\in \Z_{> 0}$ and $q_1,\ldots,q_i$ are given. Let $p_1,\ldots, p_k$ be all the odd prime divisors of $(q_1-1)\cdots (q_i-1)$.
Consider the system of congruences
\begin{alignat*}1
x&\equiv 3 \pmod 4\\
x&\equiv 2 \pmod {p_1}\\
&\vdots\\
x&\equiv 2 \pmod {p_k}.
\end{alignat*} 
By the Chinese Remainder Theorem there exists a solution $x_0$.  For each $n\in \Z_{> 0}$ the  element $x_n=x_0+n\cdot 4p_1\cdots p_k$ is also a solution.
Since $(x_0,4p_1\cdots p_k)=1$ it follows from Dirichlet's Theorem on primes in arithmetic progressions that there are infinitely many $n\in \Z_{> 0}$ such that
$x_{n}$ is prime. We pick one of those  satisfying $x_n>q_i$ and set $q_{i+1}=x_n$.
\end{proof}
\begin{rmk}\label{rm-Dirichlet}
Following the notation of \Cref{sect:tot-real-0-4}, for an odd prime $q$, the field $\qq(g(\frac{1}{q}))$ is a totally real field with $[\qq(g(\frac{1}{q})) : \qq] = \frac{q-1}{2}$ \cite[p. 305]{R62}.

It follows that if $(q_i)_i$ is a sequence of primes as in \Cref{lem-Dirichlet}, then for $i \neq j$, the degrees $[\qq(g(\frac{1}{q_i})) : \qq]$ and $[\qq(g(\frac{1}{q_j})) : \qq]$ are coprime.
\end{rmk}
\begin{prp}\label{prp-noNP-noUQF}
Let $(q_i)_i$ be a sequence of primes as in \Cref{lem-Dirichlet} and let $F_i = \qq(g(\frac{1}{q_i}))$.
Define $K_0 = \Q^{\tr,2}$ and inductively define $K_i$ to be the compositum $K_{i-1}F_i$.
Set $K = \bigcup_{i \in \Z_{\geq 0}} K_i$.
The following hold:
\begin{enumerate}
\item\label{it:tot-real} $K$ is a totally real field,
\item\label{it:no-uqf} $\mc{O}_K^{\times, +}/\mc{O}_K^{\times 2}$ is infinite, and in particular there is no universal quadratic lattice over $K$,
\item\label{it:no-NP}
$\mc{O}_K$ does not have the Northcott property.
\end{enumerate}
\end{prp}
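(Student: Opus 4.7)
The plan is to verify (\ref{it:tot-real}), (\ref{it:no-uqf}), (\ref{it:no-NP}) in turn, leveraging the arithmetic of the primes $q_i$. Part (\ref{it:tot-real}) will be immediate: each $F_i = \qq(g(\frac{1}{q_i}))$ is the maximal totally real subfield of the cyclotomic field $\qq(\zeta_{q_i})$, hence totally real; thus every $K_i$, being a compositum of totally real fields, is totally real, and so is the directed union $K$.

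For (\ref{it:no-uqf}), the plan is to show that the infinitely many square classes of totally positive units in $\mc O_{K_0}^{\times,+}/\mc O_{K_0}^{\times 2}$ provided by \cite[Theorem 7.3]{DKM-Northcott} remain distinct in $\mc O_K^{\times,+}/\mc O_K^{\times 2}$, at which point \Cref{thm:rscl} rules out the existence of a universal quadratic lattice. The key observation is that $[F_i:\qq] = (q_i-1)/2$ is odd, since $q_i \equiv 3 \pmod 4$, and that $F_i/\qq$ is abelian Galois. Standard Galois theory then gives $[K_i : K_{i-1}] = [F_i : F_i \cap K_{i-1}]$, which divides $(q_i-1)/2$ and is therefore odd. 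By \cite[Proposition 4.4]{DKM-Northcott} (odd-degree extensions preserve square classes of totally positive units), distinct classes in $K_{i-1}$ remain distinct in $K_i$; inductively they remain distinct in every $K_i$, and since any square in $K$ lies in some $K_i$, they remain distinct in $K$. The main subtlety is that $K_{i-1}$ has infinite degree over $\qq$, so one cannot multiply absolute degrees and must instead rely on the Galois-theoretic identity above to reduce the claim to the finite number $[F_i : \qq]$.

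For (\ref{it:no-NP}), the plan is to exhibit an infinite sequence of distinct algebraic integers in $\mc O_K$ of uniformly bounded house. The natural candidates are $\eta_i := g(\frac{1}{q_i}) = 2\cos(2\pi/q_i) \in \mc O_{F_i} \subset \mc O_K$, which are pairwise distinct because $q \mapsto 2\cos(2\pi/q)$ is strictly increasing in $q \geq 2$. All $\qq$-conjugates of $\eta_i$ are of the form $2\cos(2\pi k/q_i)$ and so have absolute value at most $2$; hence $\house{\eta_i} \leq 2$ for all $i$, so $\{\alpha \in \mc O_K : \house{\alpha} < 3\}$ is infinite, violating the Northcott property.
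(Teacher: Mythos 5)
Your proof is correct, and parts (\ref{it:tot-real}) and (\ref{it:no-NP}) match the paper's argument (the paper checks distinctness of the $g(\frac{1}{q_i})$ via their degrees $(q_i-1)/2$ rather than via monotonicity of $2\cos(2\pi/q)$, but this is immaterial). Where you genuinely diverge is the degree computation in part (\ref{it:no-uqf}). The paper proves the exact formula $[K_i:K_0]=\prod_{j\le i}[F_j:\qq]$: it picks a number field $M\subset K_0=\Q^{\tr,2}$ with $[K_i:K_0]=[MF_i\cdots F_1:M]$, uses the pairwise coprimality of the odd degrees $(q_j-1)/2$ (this is where the condition $(q_i-1,q_j-1)=2$ from \Cref{lem-Dirichlet} enters) to get $[F_i\cdots F_1:\qq]=\prod_j[F_j:\qq]$, and exploits that $[M:\qq]$ is a power of $2$ to force this odd number to divide $[MF_i\cdots F_1:M]$; it then applies \cite[Proposition 4.4]{DKM-Northcott} once to the direct limit $K/K_0$ of odd-degree extensions. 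You instead argue stepwise: since $F_i/\qq$ is finite abelian Galois, the translation theorem gives $[K_i:K_{i-1}]=[F_i:F_i\cap K_{i-1}]\mid\frac{q_i-1}{2}$, which is odd because $q_i\equiv 3\pmod 4$, and you propagate distinctness of the square classes of \cite[Theorem 7.3]{DKM-Northcott} through each odd step and then to the union. Your route is slightly leaner: it needs only that the $q_i$ are distinct primes $\equiv 3\pmod 4$ and bypasses both the coprimality condition and the $2$-power-degree trick, while the paper's computation buys the stronger, explicit statement that the degrees multiply exactly. Both endgames (infinitude of $\mc{O}_K^{\times,+}/\mc{O}_K^{\times 2}$ plus \Cref{thm:rscl}) coincide.
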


\begin{proof}
\eqref{it:tot-real} Since $K_0$ and each $F_i$ are totally real, so is $K$.

\eqref{it:no-uqf} We first show that $[K_i : K_0] = \prod_{j=1}^i [F_i : \qq]$.
To this end, first note that there exist a number field $M\subset K_0$ such that  $[K_i:K_0]=[MF_{i}\cdots F_{1}:M]$.
Next note that $[F_i\cdots F_1:\Q]=[F_i:\Q]\cdots [F_1:\Q]$ which follows immediately from the fact that the 
factors on the right hand-side are pairwise coprime. In particular, $[F_i\cdots F_1:\Q]$ is odd and of course it divides
$[MF_i\cdots F_1:\Q]$. Since $[MF_i\cdots F_1:\Q]=[MF_i\cdots F_1:M][M:\Q]$ and $[M:\Q]$ is a power of $2$ we conclude
that $[F_i\cdots F_1:\Q]$ divides $[MF_i\cdots F_1:M]$ which implies equality. This proves the claim.

In particular, we have that $[K_i : K_0]$ is odd for all $i$, and thus $K/K_0$ is a direct limit of finite extensions of odd degree.
By \cite[Theorem 7.3]{DKM-Northcott} the set $\mc{O}_{K_0}^{\times, +}/\mc{O}_{K_0}^{\times 2}$ is infinite, and then by \cite[Proposition 4.4]{DKM-Northcott} also the set $\mc{O}_K^{\times, +}/\mc{O}_K^{\times 2}$ is infinite, which in view of \Cref{thm:rscl} implies that there is no universal quadratic lattice over $K$.

\eqref{it:no-NP} The elements $g(\frac{1}{q_i})\in\co_K$ are pairwise distinct (as their respective degrees over $\Q$ are $(q_i-1)/2$) and they satisfy $-2 \prec g(\frac{1}{q_i}) \prec 2$. Hence $\mc{O}_K$ does not have the Northcott property.
\end{proof}

\section{Topological considerations}\label{sect:topological}

Let us recall some concepts from topology, in particular Baire theory. We refer the reader to any textbook on general topology, like \cite[Chapter 20]{HandbookAnalysis}, for details.
For a topological space $X$ and a subset $A \subset X$, we call $A$:
\begin{itemize}
\item \textit{nowhere dense} if the closure of $A$ does not contain a non-empty open set,
\item \textit{meager} if $A$ is a countable union of nowhere dense sets,
\item \textit{comeager} if $X \setminus A$ is meager,
\end{itemize}

Recall that $\qq^{\tr}$ denotes the field of totally real algebraic numbers.
We endow the power set $2^{\qq^{\tr}}$ with the product topology, where each component is endowed with the discrete topology; this is sometimes called the \emph{constructible topology}.
As a topological space it is homeomorphic to the Cantor space $2^\omega$; in particular (e.g.~by Tychonoff's Theorem) it is a compact Hausdorff space.
It is the coarsest topology on $2^{\qq^{\tr}}$ in which all sets of the form $\lbrace A \in 2^{\qq^{\tr}} \mid a \in A \rbrace$ for $a \in \qq^{\tr}$ are both open and closed.
In particular, any Boolean combination of such sets is both open and closed.
A basis of open sets of $2^{\qq^{\tr}}$ is given by the sets
\begin{equation}\label{eq:basis-open}
H(A, B) = \lbrace S \subset \qq^{tr} \mid A \subset S \text{ and } B \cap S = \emptyset \rbrace
\end{equation} 
for arbitrary finite sets $A, B \subset \qq^{tr}$.
Note that $H(A, B)$ is both open and closed.

We now consider within $2^{\qq^{\tr}}$ the subspace $\mc{X}$ consisting of all \textit{subfields} of $\qq^{\tr}$.
The subspace topology on $\mc{X}\subset 2^{\qq^{\tr}}$ is the \textit{constructible topology} already defined in the Introduction, i.e.  the coarsest topology in which the sets
$ \lbrace K \in \mc X \mid a \in K \rbrace $
are both open and closed, for all $a \in \qq^{\tr}$.
Note that $\mc{X}$ is a closed subset of $2^{\qq^{\tr}}$; this can be seen by observing that
\begin{align*}
\mc{X} = \enspace&\lbrace K \in 2^{\qq^{\tr}} \mid 0, 1 \in K \rbrace \\
&\cap \bigcap_{\substack{a \in \qq^{\tr} \\ b \in \qq^{\tr} \setminus \lbrace 0 \rbrace}} \lbrace K \in 2^{\qq^{\tr}} \mid \text{if } a, b \in K \text{ then } ab, a+b, -b, b^{-1} \in K \rbrace.
\end{align*}
In particular $\mc{X}$ with the subspace topology is also a compact Hausdorff space.
By the Baire category theorem, $\mc{X}$ is a Baire space, i.e.~countable unions of closed sets with empty interior also have empty interior.
Equivalently, countable intersections of dense open sets remain dense.
In particular there exist non-meager subsets of $\mc{X}$, and thus no subset of $\mc{X}$ is both meager and comeager.

We will now show that a totally real field $K \in \mc X$ ``usually does not carry a universal quadratic lattice'' by proving that the set of all $K \in \mc X$ for which a universal quadratic lattice exists, is meager inside $\mc X$.
This is inspired by the philosophy underlying \cite{EMSW20, DF21}, where it is shown that the set of algebraic field extensions $K/\qq$ for which $\mc{O}_K$ is first-order definable in $K$, is meager with respect to the constructible topology.

We shall use the following lemma, which is easily verified by using the open basis for the topology as given in \eqref{eq:basis-open}.

\begin{lem}\label{lem:dense}
A set $A \subset \mc X$ is dense in $\mc X$ if and only if, for every finite Galois extension of totally real number fields $L_0/K_0$ there exists $K \in A$ with $K \cap L_0 = K_0$.
\end{lem}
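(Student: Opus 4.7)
My plan is to unpack density of $A \subset \mc{X}$ via the explicit basis $\{H(A_0, B_0) \cap \mc{X}\}$ of open sets from \eqref{eq:basis-open} and translate it into the Galois-extension condition using standard Galois theory. Concretely, $A$ is dense iff, for every finite $A_0, B_0 \subset \qq^{\tr}$ with $H(A_0, B_0) \cap \mc{X}$ non-empty, the intersection $A \cap H(A_0, B_0) \cap \mc X$ is also non-empty.

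For the ``density $\Rightarrow$ Galois'' direction, I would fix a finite Galois extension $L_0/K_0$ of totally real number fields and construct a finite $A_0 \subset K_0$ with $K_0 = \qq(A_0)$ together with a finite $B_0 \subset L_0 \setminus K_0$ forcing any $K \in H(A_0, B_0) \cap \mc X$ to satisfy $K \cap L_0 = K_0$. By the Galois correspondence, the intermediate fields $M_1, \ldots, M_r$ of $L_0/K_0$ minimal among those strictly containing $K_0$ correspond to the maximal proper subgroups of the finite group $\mathrm{Gal}(L_0/K_0)$, hence are finite in number. Picking $\beta_i \in M_i \setminus K_0$ and setting $B_0 = \{\beta_1, \ldots, \beta_r\}$, the set $H(A_0, B_0) \cap \mc X$ contains $K_0$ (hence is non-empty), and any $K$ in it satisfies $K \cap L_0 \supset K_0$ but $K \cap L_0 \not\supset M_i$ for any $i$, forcing $K \cap L_0 = K_0$ since every intermediate field properly containing $K_0$ contains some $M_i$. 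Density of $A$ then gives the desired $K \in A$.

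For the converse, I would start with a non-empty basic open set $U = H(A_0, B_0) \cap \mc{X}$ and a witness $K' \in U$. Let $K_0 = \qq(A_0)$; this is a totally real number field contained in $K'$, and $B_0 \cap K_0 \subset B_0 \cap K' = \emptyset$. Let $L_0$ be the Galois closure of $K_0(B_0)$ over $\qq$, which is totally real since all its generators are, so $L_0/K_0$ is a finite Galois extension of totally real number fields. The hypothesis then produces $K \in A$ with $K \cap L_0 = K_0$; now $A_0 \subset K_0 \subset K$, and $B_0 \subset L_0$ gives $B_0 \cap K \subset L_0 \cap K = K_0$, so $B_0 \cap K = \emptyset$, yielding $K \in A \cap U$.

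The only real subtlety is the first direction's reliance on the Galois hypothesis to make the set of minimal intermediate fields $M_i$ finite; without it, one could not capture the condition ``$K \cap L_0 = K_0$'' by a basic open set of the constructible topology, which only allows finitely many forbidden elements. The remaining verifications are a routine matching of the definitions.
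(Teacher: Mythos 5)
Your proof is correct and is precisely the routine verification the paper has in mind (it offers no written proof beyond pointing to the basis \eqref{eq:basis-open}): construct, for a given Galois $L_0/K_0$, the basic open set $H(A_0,B_0)\cap\mc X$ with $A_0$ generating $K_0$ and $B_0$ hitting the finitely many minimal intermediate fields, and conversely reduce a non-empty basic open set to the Galois closure of $\qq(A_0\cup B_0)$. One quibble with your closing remark: the finiteness of the intermediate fields of $L_0/K_0$ needs only separability (primitive element theorem), not Galoisness, so the same $B_0$-construction would capture ``$K\cap L_0=K_0$'' for an arbitrary finite extension; the Galois restriction in the lemma is simply harmless and convenient, since it weakens the criterion to be verified and your converse direction passes to the Galois closure anyway.
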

\begin{thm}\label{prp:uqf-meager}
Consider the subset of $\mc X$ given by
$$ U = \lbrace K \in \mc X \mid \mc{O}_K^{\times, +}/\mc{O}_K^{\times 2} \text{ is finite } \rbrace.$$
Then $U$ is a meager subset of $\mc X$.
In particular, its subset
$$ U' = \lbrace K \in \mc X \mid \text{ there is a universal quadratic lattice over } K \rbrace $$
is meager.
\end{thm}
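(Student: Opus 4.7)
The plan is to write $U$ as the countable union $U = \bigcup_{n \geq 0} U_n$ with $U_n = \lbrace K \in \mc{X} : |\mc{O}_K^{\times,+}/\mc{O}_K^{\times 2}| \leq n \rbrace$. That $U' \subset U$ follows directly from \Cref{thm:rscl}. It thus suffices to show each $U_n$ is nowhere dense. For closedness, I would note that $\mc{X} \setminus U_n$ is a countable union of clopen conditions of the form ``$\varepsilon_0, \ldots, \varepsilon_n \in K$ and $\varepsilon_i \varepsilon_j^{-1} \notin K^{\times 2}$ for all $i < j$'', indexed by tuples of totally positive units in $\mc{O}^{\tr}$; clopenness holds because membership of any fixed element of $\qq^{\tr}$ in $K$ is clopen, and ``$\varepsilon_i \varepsilon_j^{-1} \notin K^{\times 2}$'' reduces to the absence from $K$ of the (at most two) totally real square roots of $\varepsilon_i \varepsilon_j^{-1}$, if any exist.

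To prove $U_n$ has empty interior, I would invoke \Cref{lem:dense}: for each finite Galois totally real number field extension $L_0/K_0$, I must exhibit $K \in \mc{X}$ with $K \cap L_0 = K_0$ and $|\mc{O}_K^{\times,+}/\mc{O}_K^{\times 2}| > n$. In fact I would aim for $\mc{O}_K^{\times,+}/\mc{O}_K^{\times 2}$ infinite, handling every $n$ simultaneously. Adapting \Cref{prp-noNP-noUQF}, I would choose primes $p_1, p_2, \ldots$ congruent to $3 \pmod 4$, unramified in $L_0$, and such that $\sqrt{\prod_{j \in S} p_j} \notin L_0$ for every non-empty finite $S \subset \Z_{>0}$ (achievable inductively since $L_0$ contains only finitely many square roots of rationals), and primes $q_1, q_2, \ldots$ as in \Cref{lem-Dirichlet}, also unramified in $L_0$ and distinct from the $p_j$. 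Set $F_i = \qq(g(\frac{1}{q_i}))$, $M = \qq(\sqrt{p_j} : j \geq 1)$, and $K = K_0 \cdot M \cdot F_1 F_2 \cdots$.

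A ramification/Galois argument gives $K \cap L_0 = K_0$: the compositum $N = M \cdot F_1 F_2 \cdots$ is unramified outside the chosen $p_j$'s and $q_i$'s, so $N \cap L_0 = \qq$; since $L_0/\qq$ is Galois, $K_0 N \cap L_0 = K_0$ follows via the identification $\mathrm{Gal}(L_0 N / N) \cong \mathrm{Gal}(L_0/\qq)$. The main obstacle is showing $\mc{O}_K^{\times,+}/\mc{O}_K^{\times 2}$ is infinite. Following \cite[Theorem 7.3]{DKM-Northcott}, the fundamental units of $\qq(\sqrt{p_j})$ (totally positive since $p_j \equiv 3 \pmod 4$) are multiplicatively independent modulo $M^{\times 2}$, so $\mc{O}_M^{\times,+}/\mc{O}_M^{\times 2}$ is infinite. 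Passing from $M$ to $K_0 M$ can only collapse a finite subgroup of these classes: by Kummer theory, the kernel of $\mc{O}_M^{\times,+}/\mc{O}_M^{\times 2} \to \mc{O}_{K_0 M}^{\times,+}/\mc{O}_{K_0 M}^{\times 2}$ is controlled by the finitely many quadratic subfields of the Galois closure $\tilde{K_0} \subset L_0$. Finally, $K/K_0 M$ is a direct limit of odd-degree extensions coming from the $F_i$, so by \cite[Proposition 4.4]{DKM-Northcott} no further collapse occurs, and $\mc{O}_K^{\times,+}/\mc{O}_K^{\times 2}$ remains infinite as required.
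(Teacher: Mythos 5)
Your overall strategy differs from the paper's in an important way: the paper decomposes $U$ according to finite candidate sets $S$ of representatives, $U = \bigcup_S U_S$, so that to show $U_S$ has empty interior one only needs, for each finite Galois extension $L_0/K_0$, a field $K$ with $K \cap L_0 = K_0$ carrying a \emph{single} totally positive unit $\mu$ with $\mu s \notin K^{\times 2}$ for all $s \in S$; this is witnessed by a \emph{number field} $K = K_0(\sqrt{n(n+1)}, \sqrt{3n(3n+4)})$ via \cite[Propositions 6.6 and 7.1]{DKM-Northcott}. Your decomposition $U = \bigcup_n U_n$ by the cardinality of $\mc{O}_K^{\times,+}/\mc{O}_K^{\times 2}$ is legitimate and your clopenness argument for $U_n$ is fine, but it forces you to produce, for every $L_0/K_0$, a field $K$ with $K \cap L_0 = K_0$ and arbitrarily many (in your construction, infinitely many) unit square classes --- a strictly stronger requirement, and this is exactly where your argument breaks.

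The concrete gap is the claim that the fundamental units $\varepsilon_{p_j}$ of $\qq(\sqrt{p_j})$, $p_j \equiv 3 \pmod 4$, are multiplicatively independent modulo $M^{\times 2}$ in $M = \qq(\sqrt{p_j} : j \geq 1)$. This is false. For a prime $p \equiv 3 \pmod 4$ the fundamental unit $\varepsilon_p = t + u\sqrt{p}$ has norm $+1$, and from $(t+1)(t-1) = pu^2$ one gets classically that $2\varepsilon_p$ is a square in $\qq(\sqrt{p})$ (e.g.\ $2(2+\sqrt{3}) = (1+\sqrt{3})^2$, $2(8+3\sqrt{7}) = (3+\sqrt{7})^2$; the alternative factorization types would force $\varepsilon_p$ itself to be a square). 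Hence every $\varepsilon_{p_j}$ lies in the square class of $2$ in $M$, so $\varepsilon_{p_i}\varepsilon_{p_j} \in M^{\times 2}$ for all $i,j$, and these units generate at most one nontrivial class of $\mc{O}_M^{\times,+}/\mc{O}_M^{\times 2}$ rather than infinitely many. Note also that \cite[Theorem~7.3]{DKM-Northcott} concerns $\Q^{\tr,2}$ and does not transfer to the proper subfield $M$: infinitude of square classes in a larger field says nothing about a subfield, and replacing $M$ by $\Q^{\tr,2}$ itself would destroy the condition $K \cap L_0 = K_0$. This is precisely the subtlety the paper's use of $\mu \in \mc{O}_M^{\times,+} \setminus M^{\times 2}\qq^{\times}$ (not merely $\mu \notin M^{\times 2}$) is designed to handle, since units that are rational multiples of squares become squares after harmless-looking composita. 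Your Kummer-theoretic finiteness of the kernel under the finite extension $M \subset K_0M$ and the odd-degree limit step are fine in themselves, but they rest on the unestablished infinitude for $M$. (A minor secondary slip: since $p_j \equiv 3 \pmod 4$, the fields $\qq(\sqrt{p_j})$ ramify at $2$, so $N$ is not unramified outside the $p_j$'s and $q_i$'s; the conclusion $N \cap L_0 = \qq$ can still be rescued, but not by the stated one-line reason.)
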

\begin{proof}
Observe that, for $K \in \mc X$, one has
$$ K \in U \enspace\Leftrightarrow\enspace \exists S \subset \mc{O}_K^{\times, +} \text{ finite } \forall a \in \mc{O}_K^{\times, +} \exists c \in S : \sqrt{ac} \in K.$$
Denote by $\mc S$ the set of all finite subsets of $(\mc{O}^{\tr})^{\times, +}$ and note that this set is countably infinite.
By the above observation, we have
$$ U = \bigcup_{S \in \mc{S}} \bigcap_{a \in (\mc{O}^{\tr})^{\times,+}} \lbrace K \in \mc X \mid S \subset K, \text{ and if } a \in K, \text{ then } \exists s \in S : \sqrt{as} \in K \rbrace.$$
To show that $U$ is meager, since $\mc S$ is countable, it remains to show that $U_S = \bigcap_{a \in (\mc{O}^{\tr})^{\times,+}} \lbrace K \in \mc X \mid S \subset K, \text{ and if } a \in K, \text{ then } \exists s \in S : \sqrt{as} \in K \rbrace$ is nowhere dense, for each $S \in \mc S$.
And since $U_S$ is already closed, it remains to show that it has an empty interior, or equivalently, that its complement in $\mc X$ is dense.

To this end, we invoke \Cref{lem:dense}.
Consider $S \in \mc S$, an arbitrary number field $K_0$, and a finite Galois extension $L_0/K_0$.
We may assume without loss of generality that $S \subset K_0$, otherwise $K_0 \not\in U_S$ and we are done.
We may also extend $L_0$ and assume that all elements of $S$ are squares in $L_0$.
There are infinitely many natural numbers $n \equiv 1 \bmod 12$ such that $n(n+1)$, $3n(3n+4)$ and $(3n+3)(3n+4)$ are square-free integers: by applying the main theorem of \cite{Erd} to the polynomial $g(X) = (1+12X)(1+6X)(7+36X)$ one obtains that there exist infinitely many natural numbers $m$ for which $g(m)$ is square-free, and then for each such $m$, the natural number $n = 1+12m$ is as desired.
In particular, since $L_0/\qq$ is finite and thus contains only finitely many subextensions, we may find a natural number $n \equiv 1 \bmod 12$ such that $n(n+1)$, $3n(3n+4)$ and $(3n+3)(3n+4)$ are all square-free integers and such that $M = \qq(\sqrt{n(n+1)}, \sqrt{3n(3n+4)})$ is linearly disjoint from $L_0$, i.e.~$M \cap L_0 = \qq$.
By \cite[Proposition 6.6]{DKM-Northcott} there exists $\mu \in \mc{O}_M^{\times, +} \setminus M^{\times 2}\qq^\times$.
Now setting $K = MK_0 = K_0(\sqrt{n(n+1)}, \sqrt{3n(3n+4)})$, we have by \cite[Proposition 7.1]{DKM-Northcott} that $\mu$ is not a square in $ML_0$, and thus that for all $s \in S$, $\mu s$ is not a square in $K$.
We conclude that $K \not\in U_S$.
In view of \Cref{lem:dense}, we infer that $U_S$ has empty interior, as desired.
This establishes the meagerness of $U$ in $\mc X$.

For the second claim, it suffices to show that $U' \subset U$.
This is a direct consequence of \Cref{thm:rscl}: if some $K \in \mc X$ carries a universal quadratic lattice (i.e.~$K \in U'$), then this lattice has a certain finite rank $n$, but then $\mc O_K^{\times, +} / \mc{O}_K^{\times 2}$ contains at most $2n - 2$ elements, so $K \in U$.
\end{proof}

\begin{thm}\label{prp:Northcott-meager}
Consider the subset of $\mc{X}$ given by
\begin{displaymath}
N = \lbrace K \in \mc{X} \mid \forall n \in \Z_{>0} : \lvert \lbrace x \in \mc{O}_K \mid \house{x} < n \rbrace \rvert < \infty \rbrace,
\end{displaymath}
i.e.~totally real fields for which the ring of integers has the Northcott property (with respect to the house).
Then $N$ is a meager subset of $\mc{X}$.
In fact, already the superset
\begin{displaymath}
N' = \lbrace K \in \mc{X} \mid \lvert \lbrace x \in \mc{O}_K \mid \house{x} < 2 \rbrace \rvert < \infty \rbrace
\end{displaymath}
is meager.
\end{thm}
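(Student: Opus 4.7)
The plan is to mirror the proof of \Cref{prp:uqf-meager}: decompose $N'$ as a countable union of closed sets and show each has empty interior via \Cref{lem:dense}. Since $N \subseteq N'$, meagerness of $N$ will follow.

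Set $\mc T := \{x \in \mc{O}^{\tr} : \house{x} < 2\}$. By \Cref{cor:smtpe}, every $x \in \mc T$ is of the form $g(a)$ for some $a \in \qq$, so $\mc T$ is countable. For each finite subset $S \subseteq \mc T$, set
$$
N'_S := \{K \in \mc X : \mc T \cap K \subseteq S\} = \bigcap_{x \in \mc T \setminus S}\{K \in \mc X : x \notin K\},
$$
which is closed in $\mc X$. Then $N' = \bigcup_S N'_S$ is a countable union (over finite subsets of $\mc T$), so it suffices to show each $N'_S$ has empty interior, equivalently that its complement is dense in $\mc X$.

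By \Cref{lem:dense}, given a finite Galois extension of totally real number fields $L_0/K_0$, I need to produce $K \in \mc X$ with $K \cap L_0 = K_0$ containing some element of $\mc T \setminus S$. I would take $K := K_0(g(\tfrac{1}{p}))$ for an odd prime $p$ chosen so that (a) $(p-1)/2$ is coprime to $[L_0:\qq]$, and (b) $(p-1)/2 > [\qq(s):\qq]$ for every $s \in S$. Condition (a) is arranged via Dirichlet's theorem on primes in arithmetic progression: take $p \equiv 3 \pmod{4}$ and $p \equiv 2 \pmod{\ell}$ for every odd prime $\ell \mid [L_0:\qq]$, so that $(p-1)/2$ is odd and avoids every prime dividing $[L_0:\qq]$. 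Dirichlet supplies infinitely many such $p$, so (b) can be achieved by taking $p$ large.

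With this choice, $K$ is totally real (as a compositum of totally real subfields of $\qq^{\tr}$), and $[\qq(g(\tfrac{1}{p})):\qq] = (p-1)/2$ by \Cref{rm-Dirichlet}. Condition (b) ensures $g(\tfrac{1}{p}) \notin S$, while $\house{g(\tfrac{1}{p})} = 2|\cos(2\pi/p)| < 2$ places $g(\tfrac{1}{p})$ in $\mc T$, giving $K \notin N'_S$. To check $K \cap L_0 = K_0$, let $F := K \cap L_0$; then $K_0 \subseteq F \subseteq K_0(g(\tfrac{1}{p}))$ and $F \subseteq L_0$, so $[F:K_0]$ divides both $[K_0(g(\tfrac{1}{p})):K_0]$ (a divisor of $(p-1)/2$) and $[L_0:K_0]$ (a divisor of $[L_0:\qq]$); by coprimality $[F:K_0] = 1$, i.e., $F = K_0$. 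The only technical point is the prime choice via Dirichlet; the rest is routine.
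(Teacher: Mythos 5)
Your proof is correct and takes essentially the same approach as the paper: the same countable decomposition of $N'$ into closed sets indexed by finite (equivalently, cofinite) subsets of $\lbrace x \in \mc{O}^{\tr} \mid \house{x} < 2 \rbrace$, and the same density argument via \Cref{lem:dense} with $K = K_0(g(\frac{1}{p}))$ for a prime $p$ supplied by Dirichlet's theorem with $(p-1)/2$ coprime to $[L_0:\qq]$ (the paper packages this prime choice in \Cref{lem-Dirichlet} and avoids $S$ by cofiniteness rather than by a degree bound). The only cosmetic slip is that $\house{g(\frac{1}{p})} = 2\cos(\pi/p)$ rather than $2|\cos(2\pi/p)|$, but the bound $\house{g(\frac{1}{p})} < 2$ you actually need is of course still valid.
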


\begin{proof}
It suffices to show the second statement, since subsets of meager sets are meager.

Recall from \Cref{sect:tot-real-0-4} that $\lbrace x \in \mc{O}^{\tr} \mid \house{x} < 2 \rbrace$ is infinite; it consists of all numbers of the form $\zeta + \zeta^{-1}$ where $\zeta$ is a root of unity different from $\pm 1$.
Denote by $\mc{S}$ the set of all cofinite subsets of $\lbrace x \in \mc{O}^{\tr} \mid \house{x} < 2 \rbrace$, and note that $\mc{S}$ is countably infinite.
We observe that
$$ N' = \bigcup_{S \in \mc{S}} \bigcap_{s \in S} \lbrace K \in \mc X \mid s \not\in K \rbrace.$$
To show that $N'$ is meager, since $\mc{S}$ is countable, it remains to show that $N_S = \bigcap_{s \in S} \lbrace K \in \mc X \mid s \not\in K \rbrace$ is nowhere dense for every $S \in \mc{S}$.
Since $N_S$ is already closed, it remains to show that it has empty interior, or equivalently, that its complement in $\mc X$ is dense.
We want to invoke \Cref{lem:dense} to this end and show that its hypotheses are satisfied.

So fix $S \in \mc S$, and consider an arbitrary number field $K_0$ and a finite Galois extension $L_0/K_0$.
By \Cref{lem-Dirichlet} there is an infinite sequence of primes $(q_i)_i$ with $q_i \equiv 3 \pmod 4$ for all $i$, and $(q_i - 1, q_j - 1) = 2$ for $i \neq j$.
In particular, we may find a prime $q \equiv 3 \bmod 4$ such that $\frac{q-1}{2}$ and $[L_0 : \qq]$ are coprime and $g(\frac{1}{q}) \in S$.
By \Cref{rm-Dirichlet} we have $[\qq(g(\frac{1}{q})) : \qq] = \frac{q-1}{2}$, so in particular $[K_0(g(\frac{1}{q})) : K_0] = \frac{q-1}{2}$ and $[L_0 : K_0]$ are coprime.
But then $K_0(g(\frac{1}{q})) \cap L_0 = K_0$ and $K_0(g(\frac{1}{q})) \in \mc X \setminus N_S$.
In view of \Cref{lem:dense} we conclude that $\mc X \setminus N_S$ is dense in $\mc X$, as desired.
\end{proof}

\section{Non-totally real fields}\label{sect:non-tot-real}
So far, we have only considered algebraic extensions $K/\qq$ which are totally real; otherwise
the behaviour of universal quadratic lattices is radically different.
In fact, for a non-totally real algebraic field extension $K/\qq$, there \textit{always} exists a universal quadratic form, and one may even take the same rank $4$ positive definite form defined over $\qq$ for each field.
That such a form exists is possibly well-known to experts, but we include a short argument for the reader's convenience.

Note that here we call a quadratic lattice $(\mc L, Q)$ over an algebraic extension $K/\qq$ \textit{universal} if it represents precisely those elements $\alpha \in \mc{O}_K$ for which $\sigma(\alpha) \geq 0$ for all embeddings $\sigma:K \hookrightarrow \rr$.
If $K$ is totally real, this does not precisely coincide with the definition given in \Cref{sect:universal-qf-units}, as we now also allow positive \textit{semidefinite} lattices (i.e.~lattices which non-trivially represent $0$).
Nevertheless, in the example below, the lattice will not non-trivially represent $0$ as soon as there is at least one embedding $K \hookrightarrow \rr$.

\begin{prp}\label{prp:non-real}
Let $K/\qq$ be an algebraic field extension which is \emph{not} totally real.
The quadratic form over $K$ given by $W^2 + WX + X^2 + Y^2 + Z^2$ is universal.
\end{prp}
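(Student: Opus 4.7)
The plan is to show that every $\alpha \in \co_K$ with $\sigma(\alpha) \geq 0$ at every real embedding $\sigma: K \hookrightarrow \R$ lies in the image of $Q = W^2 + WX + X^2 + Y^2 + Z^2$ on $\co_K^4$. The converse inclusion is immediate: since $Q$ is positive definite over $\R$, for any real embedding $\sigma$ and any $w,x,y,z \in \co_K$, $\sigma(Q(w,x,y,z)) = Q(\sigma(w), \sigma(x), \sigma(y), \sigma(z)) \geq 0$.

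First I would reduce to the case that $K$ is a non-totally-real number field. Since $\alpha$ lies in the number field $\Q(\alpha)$ and $K$ contains some element $\beta$ with a non-real Galois conjugate (witnessing that $K$ is not totally real), the number field $L_0 := \Q(\alpha, \beta) \subseteq K$ is itself not totally real and contains $\alpha$. The delicate point is preserving the positivity condition during the reduction: a real embedding $\sigma: L_0 \hookrightarrow \R$ might extend only to non-real embeddings of $K$, in which case the hypothesis on $K$ puts no constraint on $\sigma(\alpha)$. This can be handled by successively adjoining elements of $K$ to $L_0$ so that every real embedding of the enlarged number field $L \subseteq K$ extends to a real embedding of $K$, thereby transferring the positivity hypothesis from $K$ to $L$.

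Second, once reduced to a non-totally-real number field $L$, I would invoke the local-global theory of integral representations for quaternary quadratic forms. The form $Q$ is a classical universal quadratic form over $\Z$ (it represents every positive integer; this can be verified directly or via Bhargava--Hanke's criterion), and so by base change $Q$ is locally universal at every finite place of $L$. At the archimedean places, the real places are covered by positive definiteness of $Q$ together with the positivity hypothesis on $\alpha$, while the complex places (at least one of which exists since $L$ is not totally real) are covered trivially. The existence of a complex place is then exactly what is required to apply classical spinor-genus theory of Kneser and Eichler: for a positive definite quaternary form over the ring of integers of a number field with at least one complex archimedean place, the genus and spinor genus of $Q$ coincide, so local integral representability at every place implies global integral representability.

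The main obstacle will be twofold: arranging the reduction to a number field carefully so as to preserve the positivity condition on $\alpha$, and uniformly verifying the local integral representability conditions on $Q$ at the small primes dividing the discriminant of $Q$ (namely $2$ and $3$) across all non-totally-real number fields $L$. A more elementary alternative could perhaps bypass the spinor-genus machinery entirely by exploiting the structural flexibility of non-real elements in $\co_K$ to construct representations by $Q$ directly.
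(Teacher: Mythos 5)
Your skeleton is the paper's: reduce to a non-totally-real number field $F \subseteq K$ containing $\alpha$, then apply the integral local--global principle available for a rank-$4$ positive definite form over a field with a complex place (strong approximation/spinor genus, as in the paper's citation of Hsia), after checking the local conditions. The genuine gap is the finite-place check, which you do not carry out and whose proposed justification is invalid: universality of $Q$ over $\Z$ does imply that $Q$ represents all of $\Z_p$ over every $\Z_p$, but this does \emph{not} ``base change'' to the completions $\co_{\mathfrak{p}}$ of number fields at ramified primes. Concretely, $W^2+X^2+Y^2+Z^2$ is universal over $\Z$, yet it does not represent $\sqrt{2}$ over $\Z_2[\sqrt{2}]$: every square there is congruent to an element of $\Z_2$ modulo $2$, hence so is every sum of squares, while $\sqrt{2}$ is not congruent to any element of $\Z_2$ modulo $2$. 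So ramified dyadic primes are exactly where a uniform argument is required, and this is where the cross term $WX$ is essential: the paper shows via Hensel's lemma that $W^2+WX+X^2$ represents every unit of $\co_{\mathfrak{p}}$ at dyadic $\mathfrak{p}$ (non-units are handled by $\alpha=(\alpha-1)+1^2$), and at odd $\mathfrak{p}$ writes $-1=\beta_1^2+\beta_2^2$ and uses the identity $\alpha=\bigl(\tfrac{\alpha+1}{2}\bigr)^2+\bigl(\tfrac{\beta_1(\alpha-1)}{2}\bigr)^2+\bigl(\tfrac{\beta_2(\alpha-1)}{2}\bigr)^2$, which also disposes of $p=3$ with no special case. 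You flag ``the primes $2$ and $3$'' as an obstacle but give no argument, so the proof is incomplete precisely at its core. (Minor imprecision: what the complex place buys is not ``genus $=$ spinor genus'' per se but the representation statement that locally represented integers are represented, via strong approximation; for definite forms over totally real fields no such principle holds, which is the whole point.)

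On the reduction step, you correctly identify a subtlety that the paper itself passes over quickly: a real embedding of the number subfield need not see the positivity hypothesis, which is stated only for real embeddings of $K$. However, your remedy --- enlarge $L$ until every real embedding of $L$ extends to a real embedding of $K$ --- is not always achievable: for $K=\bigcup_n\Q(2^{1/2^n})$, whose only real embedding is the inclusion, every number field $\Q\subsetneq L\subseteq K$ equals some $\Q(2^{1/2^k})$ and has a second real embedding $2^{1/2^k}\mapsto-2^{1/2^k}$ extending only to non-real embeddings of $K$. What is actually needed is weaker and attainable: an $F$ that is not totally real, contains $\alpha$, and satisfies $\tau(\alpha)\ge 0$ for every real embedding $\tau$ of $F$. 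This exists by compactness: the embeddings $K\hookrightarrow\C$ form an inverse limit of the finite sets of embeddings of number subfields, the set $\{\sigma:\sigma(\alpha)<0\}$ is clopen (hence compact) and by hypothesis consists of non-real embeddings, so finitely many elements $\beta_1,\dots,\beta_k\in K$ witness non-reality on all of it; adjoin these, together with one element of $K$ having a non-real conjugate, to $\Q(\alpha)$. With that fix and the local computations actually supplied, your argument becomes the paper's.
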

\begin{proof}
Let $Q(W, X, Y, Z) = W^2 + WX + X^2 + Y^2 + Z^2$.
Since $Q = (W + \frac{1}{2}X)^2 + \frac{3}{4}X^2 + Y^2 + Z^2$, it is clearly positive definite over $\qq$, and so it is positive semidefinite over $K$.

Given $\alpha\in\co_K$ such that $\sigma(\alpha) \geq 0$ for all embeddings $\sigma:K \hookrightarrow \rr$, we want to show that it is represented by $Q$. As $K$ is not totally real (but possibly has infinite degree), we can take a number field $F\subset K$ that is also not totally real and that contains $\alpha$. 

Since $Q$ has $\zz$-coefficients, we can naturally view it as a quadratic form over every completion $\mc O_\mathfrak p$ of $\mc{O}_F$ with respect to a prime ideal $\mathfrak p$.
Similarly, we can consider $Q$ as a quadratic form over the archimedean completions $\C$ and (possibly) $\R$ that arise from embeddings $\sigma:F\hookrightarrow\C$. When $R$ denotes one of the rings $\mc{O}_F$, $\mc O_\mathfrak p$, $\C$, or $\R$, we say that \textit{$Q$ represents an element $\beta\in R$  over $R$} if $Q(v)=\beta$ for some $v\in R^4$. 
As $F$ has at least one complex embedding and $Q$ has rank $4$, 
by the theory of spinor genera we have the following local--global principle for integral representations by $Q$ (see e.g.~\cite[p.~148]{Hsia76}): 
\textit{The quadratic form $Q$ represents an element $\gamma \in \mc{O}_F$ over $\mc{O}_F$ if and only if $Q$ represents $\gamma$ 
over every completion $\mc{O}_\mathfrak p$ for a prime ideal $\mathfrak p$ and $Q$ represents $\sigma(\gamma)$ over $\C$ (or $\R$) for every complex (or real) embedding $\sigma$.}

The form $Q$ represents all elements of $\C$, as they are represented already by $W^2$. By our assumption on $\alpha$, we have $\sigma(\alpha)\in\R_{\geq 0}$ for every real embedding $\sigma$, and so $Q$ represents $\sigma(\alpha)$ over $\R$.
It thus remains to show 
that $Q$ represent $\alpha$ over $\mc{O}_\mathfrak p$ for each prime ideal $\mathfrak p$.

If $\mathfrak p$ is non-dyadic (i.e.~ $2\not\in\mathfrak p$; then $2 \in \mc{O}_\mathfrak p^\times$), then by Hensel's Lemma, there exist $\beta_1, \beta_2 \in \mc{O}_\mathfrak p$ such that $-1 = \beta_1^2 + \beta_2^2$. Then we have
$$ \alpha = \left( \frac{\alpha +1}{2}\right)^2 + \left( \frac{\beta_1 (\alpha - 1)}{2}\right)^2 + \left( \frac{\beta_2 (\alpha - 1)}{2}\right)^2,$$
whereby already the subform $X^2 + Y^2 + Z^2$ of $Q$ represents $\alpha$ over $\mc{O}_\mathfrak p$.

Finally, let $\mathfrak p$ be dyadic (i.e.~ $2\in\mathfrak p$). 
For an element $\gamma \in \mc{O}_\mf{p}^\times$, let $\beta_1 \in \mc{O}_\mf{p}^\times$ be such that $\gamma \equiv \beta_1^2 \bmod \mf{p}$.
The polynomial $\beta_1^2 + \beta_1 X + X^2 - \gamma$ has a simple root $\beta_1$ modulo $\mf{p}$, and hence by Hensel's Lemma it has a root $\beta_2$ in $\mc{O}_\mf{p}$, whereby $\gamma = \beta_1^2 + \beta_1 \beta_2 + \beta_2^2$.
In particular, if $\alpha \in \mc{O}_\mf{p}^\times$, then it is represented over $\mc{O}_\mathfrak p$ by the subform $W^2 + WX + X^2$ of $Q$.
If $\alpha \in \mc{O}_\mathfrak p \setminus \mc{O}_\mathfrak p^\times$, then  $\alpha - 1 \in \mc{O}_\mathfrak p^\times$, and so $\alpha = \alpha - 1 + 1^2$ is represented over $\mc{O}_\mathfrak p$ by the subform $W^2 + WX + X^2 + Y^2$.
\end{proof}

\printbibliography

\end{document}